\documentclass[11pt,a4paper]{article}
\usepackage[T1]{fontenc}
\usepackage{lmodern}
\usepackage[margin=2.7cm]{geometry}
\usepackage{amsmath,amsthm}
\usepackage{tikz}
\usepackage[hidelinks]{hyperref}
\hypersetup{pdftitle={Four colours are sometimes needed for closed-neighbourhood
conflict-free colouring of planar graphs},pdfauthor={Jelena Sedlar, Riste Skrekovski}}
\definecolor{gA}{RGB}{0,90,170}
\definecolor{gB}{RGB}{0,130,60}
\definecolor{gC}{RGB}{200,100,0}
\definecolor{gD}{RGB}{0,150,170}
\definecolor{gE}{RGB}{130,40,160}
\newtheorem{theorem}{Theorem}
\newtheorem{lemma}[theorem]{Lemma}
\newtheorem{corollary}[theorem]{Corollary}
\newtheorem{proposition}[theorem]{Proposition}
\newtheorem{conjecture}[theorem]{Conjecture}
\theoremstyle{definition}
\newtheorem{definition}[theorem]{Definition}
\theoremstyle{remark}

\newcommand{\cf}{\chi_{\mathrm{iCFc}}}
\newcommand{\W}{\mathcal{W}}
\begin{document}

\begin{center}
{\Large\bfseries Four colours are  needed for closed-neighbourhood\\[0.1cm]
conflict-free colouring of all planar graphs}\\[0.4cm]
Jelena Sedlar$^{1,3}$, Riste \v{S}krekovski$^{2,3,4}$\\[0.2cm]
{\small $^{1}$ \textit{University of Split, Faculty of Civil Engineering, Architecture and
Geodesy, Split, Croatia}}\\[0.05cm]
{\small $^{2}$ \textit{University of Ljubljana, FMF, Ljubljana, Slovenia}}\\[0.05cm]
{\small $^{3}$ \textit{University of Novo Mesto, FIS, Novo Mesto, Slovenia}}\\[0.05cm]
{\small $^{4}$ \textit{Rudolfovo -- Science and Technology Centre, Novo Mesto,
Slovenia}}
\end{center}

\begin{abstract}
A colouring of the vertices of a graph is conflict-free with respect to closed
neighbourhoods if every closed neighbourhood contains a colour that appears on
exactly one of its vertices. The colouring need not be proper. Every planar graph
has such a colouring with four colours. Fabrici, Lu\v{z}ar, Rindo\v{s}ov\'a and
Sot\'ak conjectured that three colours always suffice. We disprove this
conjecture. We give a simple gadget construction that turns every planar graph of
chromatic number four into a planar graph with no such colouring with three
colours. This also shows that it is NP-complete to decide whether three colours
suffice for a planar graph. We also give a planar graph on $25$ vertices that
needs four colours. Further, we give
critical planar graphs on $25$ vertices with $52$ and with $51$ edges, an infinite
family of plane triangulations that need four colours, and an infinite family of
critical examples.
\end{abstract}

\noindent\textbf{Keywords:} conflict-free colouring; closed neighbourhood;
planar graph; NP-completeness.\par
\noindent\textbf{2020 Mathematics Subject Classification:} 05C15, 05C10, 68Q17.

\section{Introduction}\label{SecIntro}

Conflict-free colourings of hypergraphs were introduced by Even, Lotker, Ron
and Smo\-ro\-din\-sky \cite{Even}. For a survey see \cite{Smorodinsky}. A colouring
of the vertices of a graph $G$ is \emph{conflict-free with respect to closed
neighbourhoods}, or a \emph{CF-colouring} for short, if every closed
neighbourhood $N[v]$ contains a colour that appears on exactly one of its
vertices. The colouring need not be proper. Equivalently, it is a conflict-free
colouring of the hypergraph $\{N[v]:v\in V(G)\}$. The minimum number of colours
in a CF-colouring of $G$ is denoted by $\cf(G)$ \cite{Fabrici}. The subscript
stands for \emph{improper conflict-free} colouring with respect to \emph{closed}
neighbourhoods. In a proper colouring the colour of $v$ appears exactly once in
$N[v]$. Hence $\cf(G)\le\chi(G)$, and by the Four Colour Theorem $\cf(G)\le4$ for
every planar graph $G$.

Pach and Tardos \cite{PachTardos} studied this parameter, and bounds in terms of
the maximum degree were given in \cite{PachTardos,GST,BKM}. Abel et
al.\ \cite{Abel} studied the variant in which some vertices may stay uncoloured.
They proved that in this variant three colours suffice for every planar graph,
which again gives $\cf(G)\le4$. For the class $\mathcal P$ of planar graphs let
$\cf(\mathcal P)=\max\{\cf(G):G\in\mathcal P\}$. Fabrici, Lu\v{z}ar,
Rindo\v{s}ov\'a and Sot\'ak \cite{Fabrici} observed that $3\le\cf(\mathcal P)\le4$.
The lower bound is attained, for example, by the $5$-cycle with one chord. They
proposed the following conjecture, which we state in an equivalent form.

\begin{conjecture}[{\cite[Conjecture~9.1(b)]{Fabrici}}]\label{Con}
Every planar graph $G$ satisfies $\cf(G)\le3$.
\end{conjecture}

For the analogous notion with respect to open neighbourhoods, Hlin\v{e}n\'y
and M\'alik \cite{HlinenyMalik} recently proved that four colours suffice for
planar graphs and that this bound is tight. In contrast, we show that
Conjecture~\ref{Con} is false. Thus four colours are needed for closed
neighbourhoods as well. Our main result is the following.

\begin{theorem}\label{Main}
There is a planar graph $G$ on $25$ vertices with $\cf(G)=4$. Moreover, deciding
whether $\cf(G)\le3$ is NP-complete for planar graphs.
\end{theorem}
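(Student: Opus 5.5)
The plan is to build a planar gadget that forces the 3-colouring to behave like a proper colouring on the original graph, and then to reduce from 3-colourability of planar graphs, which is NP-complete. Membership of ``$\cf(G)\le3$'' in NP is immediate, since a 3-colouring is a certificate that can be checked in polynomial time by scanning every closed neighbourhood.

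\textbf{The gadget and its key property.} I would design a small planar gadget $H$ with a distinguished vertex $v$ (or an edge $uv$) such that:
\begin{itemize}
\item in every CF-colouring of $H$ with colours $\{1,2,3\}$, the colour of $v$ is unique in the closed neighbourhood of $v$, counted only within the gadget;
\item more strongly, the gadget forces $v$ to be the unique witness of $N[v]$ once $v$'s other neighbours are added.
\end{itemize}
A natural candidate comes from the 5-cycle with one chord, which already needs three colours. Its rigidity under 3 colours can be exploited by attaching several copies of it to each vertex $x$ of the input graph $F$. The aim is that the only way to satisfy $N[x]$ and the gadget vertices is for $c(x)$ to occur exactly once in $N_G[x]$.

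The key lemma is then the following. For every vertex $x$ of $F$ and every 3-CF-colouring of $G=F+\text{gadgets}$, no neighbour $y$ of $x$ in $F$ has $c(y)=c(x)$. This yields a proper 3-colouring of $F$. Conversely, a proper 3-colouring of $F$ should extend to the gadgets; I would check this by an explicit colouring of each gadget for every value of $c(x)$. Colour permutations mean only one case actually needs to be checked.

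\textbf{The reduction and the 25-vertex example.} Deciding $\chi(F)\le3$ for planar $F$ is NP-complete (Garey–Johnson–Stockmeyer), and the gadgets keep $G$ planar and polynomial in size, so the NP-completeness part follows. Applying the same construction to $K_4$, whose chromatic number is four, gives a planar graph with $\cf=4$, the upper bound $4$ coming from the Four Colour Theorem. To reach exactly 25 vertices, I would hand-optimise: start from $K_4$ and attach one gadget of about 5 vertices to each of its 4 vertices, giving $4+4\cdot5=24$ or 25 vertices, possibly sharing gadget vertices across faces. The claim that no 3-CF-colouring exists would then be verified either by the lemma or by a case analysis or computer check.

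\textbf{Main obstacle.} The hardest step is the forcing lemma. CF-colourings are not monotone: adding the gadget changes the neighbourhood of $x$. The gadget must therefore force $c(x)$ to be the unique colour in $N[x]$, ruling out the alternative where some other colour is unique there, while the gadget's own vertices must still always be satisfiable. My first attempt would be to attach to $x$ a pendant structure whose 3-CF-colourings all force two neighbours of $x$ to carry each colour different from $c(x)$. Then each colour other than $c(x)$ appears at least twice in $N[x]$, so $c(x)$ must be the unique one. I would then search small planar candidates, for example a triangle-fan and chorded-$C_5$ combination, to find one with this property.
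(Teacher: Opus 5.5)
Your proposal is a plan rather than a proof. The step that carries all the content, an explicit gadget with a proof that it forces the colouring, is never supplied, and the candidates you name do not work.

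An unconditional pendant gadget at $x$ cannot be small. Suppose $Q$ is attached only at $x$ and at most one vertex $t$ of $Q$ is non-adjacent to $x$. Colour $x$ by $a$, every vertex of $N(x)\cap Q$ by $b$, and $t$ by $d$. Then $a$ is unique in $N[u]$ for each $u\in N(x)\cap Q$, and $d$ is unique in $N[t]$. Yet $N(x)\cap Q$ carries only the colour $b$. This rules out the chorded $5$-cycle: for the cycle $xv_2v_3v_4v_5$ with chord $xv_3$, the colours $a,b,b,d,b$ make every $N[v_i]$ good while $N(x)$ sees only $b$. It also rules out every pendant gadget on at most five vertices, because forcing both colours other than $c(x)$ twice needs four neighbours of $x$ in $Q$. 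So the count $4+4\cdot5$ cannot be realised by vertex gadgets. If the gadgets are instead shared between hubs, the whole difficulty is exactly the analysis you defer to ``a case analysis or computer check''.

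The idea you are missing is that the forcing need not be unconditional. It only has to refute $c(x)=c(y)$ for an edge $xy$, and then the endpoint $y$, assumed to have colour $c(x)$, can be used inside the gadget. In the paper, $W(x,y)$ is a $4$-wheel with apex $x$ plus the single edge $yr_4$. Suppose $c(y)=c(x)=a$ and the rim uses only $a$ and one other colour. Then goodness of the four rim neighbourhoods gives a parity condition that makes the rim monochromatic, and then $N[r_4]$ is not good (Lemma~\ref{L_rim}). With two copies on $xy$ with the same apex, $c(x)=c(y)$ puts $a$ twice in $N[x]$ and each other colour on both rims, so $N[x]$ is not good (Lemma~\ref{L_force}). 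Conversely, a proper $3$-colouring extends by giving each rim the colour different from $c(x)$ and $c(y)$. This yields the NP-completeness.

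For $G_0=K_4$, however, this construction gives $52$ vertices, not $25$. The graph $G_{25}$ is a separate design with five gadgets, each on two or three hubs, only one of which is a copy of $W$. Theorem~\ref{T25} is proved by a case analysis on the hub colours using Lemmas~\ref{LD}, \ref{LA} and~\ref{LE}. Neither the conditional edge gadget nor this analysis appears in your proposal.
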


The two parts of Theorem~\ref{Main} are proved in Section~\ref{SecG25}
(Theorem~\ref{T25}) and Section~\ref{SecGadget} (Corollary~\ref{NP}).

All graphs are finite and simple. For a vertex $v$ of a graph $G$, we denote by
$N_G(v)$ its open neighbourhood and by $N_G[v]=N_G(v)\cup\{v\}$ its closed
neighbourhood. We omit the subscript when $G$ is clear. A \emph{colouring} of $G$
is any map from $V(G)$ to a set of colours. It need not be proper. For a colouring
$c$ and a set $S$ of vertices we write $c(S)=\{c(v):v\in S\}$. A colour is
\emph{unique} in $S$ if it appears on exactly one vertex of $S$. The set $S$ is
\emph{good} if some colour is unique in $S$. Thus $c$ is a CF-colouring if and
only if $N[v]$ is good for every vertex $v$. A \emph{$3$-CF-colouring} is a
CF-colouring with colours $1,2,3$. In contrast, a \emph{$3$-colouring} is always
proper. We often use the following fact. If every vertex of a set $S$ has colour
$a$ or $b$, then $S$ is good if and only if $a$ appears on exactly $1$ or exactly
$|S|-1$ vertices of $S$. In particular, a set of three vertices is good if and
only if it is not monochromatic.

A planar graph $G$ with $\cf(G)=4$ is \emph{critical} if $\cf(G-v)\le3$ and
$\cf(G-e)\le3$ for every vertex $v$ and every edge $e$ of $G$. Every planar graph $G$ with $\cf(G)=4$ contains a
critical subgraph. Indeed, a subgraph $G'$ of $G$ with $\cf(G')=4$ that is minimal
under inclusion is critical, since every $G'-v$ and $G'-e$ is a proper subgraph
of $G'$. On the other hand, a critical graph need not be inclusion-minimal with
$\cf=4$. Deleting two or more elements may again give a graph with $\cf=4$.

In Section~\ref{SecGadget} we introduce forcing gadgets. In
Section~\ref{SecG25} we study the graph on $25$ vertices and give two critical
graphs on $25$ vertices. In Section~\ref{SecFam} we give an infinite family of
plane triangulations and an infinite family of critical graphs with $\cf=4$.

\section{Forcing gadgets}\label{SecGadget}

For vertices $x,y$ let $W(x,y)$ be the graph that consists of a $4$-cycle
$r_1r_2r_3r_4$, the vertex $x$ joined to all four vertices of the cycle, and the
vertex $y$ joined only to $r_4$. We call the cycle the \emph{rim} and $x$ the
\emph{apex} (see Figure~\ref{FigW}). The rim vertices are the \emph{inner}
vertices of $W(x,y)$ in the sense of the following definition.

\begin{definition}\label{D_force}
Let $Q$ be a graph with two distinct vertices $x$ and $y$, and let
$I\subseteq V(Q)\setminus\{x,y\}$. The vertices in $I$ are called \emph{inner}.
We call $Q$ a \emph{forcing gadget with apex $x$} if the following holds for
every colouring $c$ of $Q$ with colours $1,2,3$ such that $c(x)=c(y)$ and
$N_Q[u]$ is good for every $u\in I$. Both colours different from $c(x)$ appear
on $N_Q(x)\cap I$. A copy of $Q$ is \emph{attached} to an edge $xy$ of a graph
$G$ if $Q\subseteq G$, the vertices $x,y$ of $Q$ are the ends of the edge $xy$ of
$G$, and $N_G[u]=N_Q[u]$ for every $u\in I$.
\end{definition}

\begin{figure}[t]
\centering
\begin{tikzpicture}[scale=0.8,v/.style={circle,draw,fill=white,inner sep=1.2pt}]
\node[v,label=left:$x$] (x) at (0,0) {};
\node[v,label=right:$y$] (y) at (6,0) {};
\draw[thick] (x)--(y);
\foreach \s/\p in {1/r,-1/s}{
\node[v] (\p1) at (1.4,2.3*\s) {};
\node[v] (\p2) at (2.2,1.9*\s) {};
\node[v] (\p3) at (2.8,1.3*\s) {};
\node[v] (\p4) at (3.3,0.6*\s) {};
\draw (\p1)--(\p2)--(\p3)--(\p4);
\draw (\p1) .. controls (3.6,3.2*\s) and (4.6,1.6*\s) .. (\p4);
\foreach \r in {\p1,\p2,\p3,\p4} \draw (x)--(\r);
\draw (y)--(\p4);}
\node[above left=-1pt] at (r1) {\small$r_1$};
\node[above right=-1pt] at (r2) {\small$r_2$};
\node[above right=-1pt] at (r3) {\small$r_3$};
\node[below right=0pt] at (r4) {\small$r_4$};
\node[below left=-1pt] at (s1) {\small$r'_1$};
\node[below right=-1pt] at (s2) {\small$r'_2$};
\node[below right=-1pt] at (s3) {\small$r'_3$};
\node[above right=0pt] at (s4) {\small$r'_4$};
\end{tikzpicture}
\caption{Two copies of $W(x,y)$, with rims $r_1r_2r_3r_4$ and $r'_1r'_2r'_3r'_4$,
attached to an edge $xy$.}
\label{FigW}
\end{figure}

\begin{lemma}\label{L_rim}
$W(x,y)$ is a forcing gadget with apex $x$.
\end{lemma}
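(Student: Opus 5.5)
The plan is a direct case analysis. Fix a colouring $c$ of $W(x,y)$ with colours $1,2,3$ such that $c(x)=c(y)$ and $N[r_i]$ is good for every rim vertex $r_i$. By symmetry of colours we may take $c(x)=c(y)=1$. The inner vertices adjacent to $x$ are exactly the four rim vertices, so we must show that colours $2$ and $3$ both occur on the rim. Again by symmetry, we argue by contradiction and suppose colour $3$ is absent from the rim. Then every rim vertex has colour $1$ or $2$. This also covers a monochromatic rim, since the roles of $2$ and $3$ are interchangeable.

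First I list the closed neighbourhoods of the rim vertices in $W(x,y)$:
\[
N[r_1]=\{r_1,r_2,r_4,x\},\quad N[r_2]=\{r_1,r_2,r_3,x\},\quad N[r_3]=\{r_2,r_3,r_4,x\},\quad N[r_4]=\{r_1,r_3,r_4,x,y\}.
\]
All of these sets are coloured from $\{1,2\}$. So I use the fact from the introduction: such a set $S$ is good if and only if colour $1$ appears on exactly $1$ or exactly $|S|-1$ of its vertices. Let $t_j\in\{0,1\}$ indicate whether $c(r_j)=1$. The four goodness conditions then translate into the following constraints:
\begin{itemize}
\item from $N[r_1]$: $1+t_1+t_2+t_4\in\{1,3\}$, i.e.\ $t_1+t_2+t_4\in\{0,2\}$;
\item from $N[r_2]$: $t_1+t_2+t_3\in\{0,2\}$;
\item from $N[r_3]$: $t_2+t_3+t_4\in\{0,2\}$;
\item from $N[r_4]$, which has five vertices with $x,y$ both coloured $1$: $2+t_1+t_3+t_4\in\{1,4\}$, i.e.\ $t_1+t_3+t_4=2$.
\end{itemize}

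Next I combine the three triangle-type conditions modulo $2$. All three sums are even. Comparing the first two gives $t_3\equiv t_4$. The third then gives $t_2\equiv 0$, so $t_2=0$. With $t_2=0$, the first two conditions give $t_1\equiv t_4$ and $t_1\equiv t_3$, so $t_1=t_3=t_4$. Hence $t_1+t_3+t_4\in\{0,3\}$, which contradicts $t_1+t_3+t_4=2$. Therefore colour $3$, and symmetrically colour $2$, appears on the rim, which proves the lemma.

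There is no real obstacle here. The only points needing care are the correct closed neighbourhoods, in particular that $y$ lies only in $N[r_4]$, and the correct use of the two-colour goodness criterion for sets of sizes $4$ and $5$. The key asymmetry comes from the extra vertex $y$ of colour $1$ in $N[r_4]$: it forces an odd-versus-even mismatch with the parity constraints coming from the other three rim vertices.
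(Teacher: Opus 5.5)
Your proof is correct and takes essentially the paper's route. You assume a colour other than $c(x)$ is missing on the rim, turn goodness of $N[r_1],\dots,N[r_4]$ into count and parity conditions, and let the extra vertex $y$ in $N[r_4]$ force the contradiction; the only cosmetic difference is that the paper uses all four parity conditions to make the rim monochromatic, while you use three to get $t_2=0$ and $t_1=t_3=t_4$ before invoking $t_1+t_3+t_4=2$.
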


\begin{proof}
Let $c$ be a colouring of $W(x,y)$ with colours $1,2,3$ such that $c(x)=c(y)=a$
and $N[r_j]$ is good for $j=1,\dots,4$. Suppose that some colour $d\neq a$ does
not appear on the rim. Then the rim is coloured only by $a$ and the remaining
colour $b$. For $j=1,\dots,4$ let $\sigma_j$ be the number of vertices of colour
$a$ in $N[r_j]\cap\{r_1,\dots,r_4\}$. This is the rim without $r_{j+2}$ (indices
modulo $4$). For $j\le3$, the set $N[r_j]$ consists of $x$ and these three rim
vertices. So it is good if and only if $1+\sigma_j\in\{1,3\}$, that is,
$\sigma_j\in\{0,2\}$. The set $N[r_4]$ consists of $x$, $y$ and these three rim
vertices. So it is good if and only if $2+\sigma_4=4$. Hence all $\sigma_j$ are
even. Let $\sigma$ be the number of vertices of colour $a$ on the rim. Then
$\sigma_j=\sigma-1$ if $c(r_{j+2})=a$, and $\sigma_j=\sigma$ otherwise. Since all
$\sigma_j$ have the same parity, all rim vertices have the same colour. Then
$\sigma_4\in\{0,3\}$, a contradiction.
\end{proof}

We use two kinds of colourings of the rim of a copy of $W(x,y)$ whose rim
vertices have no further neighbours. Both are easy to check.
\begin{description}
\item[(W1)] Suppose that no rim vertex has colour $c(x)$, and that $y$ is absent
or $c(y)\ne c(x)$. Then $c(x)$ is unique in $N[r]$ for every rim vertex $r$. If
$c(x)\ne c(y)$ and all rim vertices get the third colour, we call the colouring
of $W(x,y)$ \emph{standard}. Then $x$ and $y$ see only the third colour on the
rim.
\item[(W2)] Suppose that $c(x)=c(y)=a$, one rim vertex $r_j$ with $j\ne2$ gets a
colour $d\ne a$, and the other three rim vertices get the third colour. Then $a$
or $d$ is unique in $N[r]$ for every rim vertex $r$. Indeed, for $r\ne r_4$ the
colour $a$ appears in $N[r]$ only on $x$, and $N[r_4]$ contains $r_j$.
\end{description}

\begin{lemma}[Forcing lemma]\label{L_force}
Let $Q$ be a forcing gadget with apex $x$ and set of inner vertices $I$, and let
$Q$ be attached to an edge $xy$ of a graph $G$. Let $c$ be a colouring of $G$
with colours $1,2,3$ such that $c(x)=c(y)$ and the sets $N[x]$ and $N[u]$,
$u\in I$, are good. Then some colour different from $c(x)$ does not appear on
$N(x)\setminus I$. In particular, suppose that two forcing gadgets with apex $x$
and disjoint sets of inner vertices are attached to $xy$. Then every
$3$-CF-colouring $c$ of $G$ satisfies $c(x)\ne c(y)$.
\end{lemma}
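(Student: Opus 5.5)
Let $a=c(x)=c(y)$. The plan is to combine the forcing property of $Q$ with the goodness of $N[x]$.

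First I transfer the hypotheses from $G$ to $Q$. Since $Q$ is attached, $N_G[u]=N_Q[u]$ for every $u\in I$. Hence the restriction of $c$ to $Q$ is a colouring with colours $1,2,3$ in which $c(x)=c(y)$ and every $N_Q[u]$, $u\in I$, is good. By Definition~\ref{D_force}, both colours $b,d\ne a$ appear on $N_Q(x)\cap I\subseteq N_G(x)\cap I$.

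Next I use that $N_G[x]$ is good, so some colour is unique in $N_G[x]$. The colour $a$ is not unique, because $x$ and $y$ both have colour $a$ and $y\in N_G(x)$ (as $xy$ is an edge). So $b$ or $d$ is unique in $N_G[x]$, say $b$. The one vertex of colour $b$ in $N_G[x]$ lies in $N(x)\cap I$, since $b$ appears there. Therefore $b$ does not appear on $N(x)\setminus I$, which is the first claim. The step most worth checking is this one: $x\notin N(x)$, $y\notin I$, and the unique occurrence is already used up inside $I$.

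For the second claim, suppose $c$ is a $3$-CF-colouring of $G$ with $c(x)=c(y)$. Let $Q_1$ and $Q_2$ be the two gadgets, with disjoint inner sets $I_1$ and $I_2$. All closed neighbourhoods are good, so both the definition and the first part apply to each gadget. As shown above, the unique colour $b$ of $N[x]$ satisfies $b\neq a$. By the forcing property of $Q_2$, the colour $b$ appears on $N(x)\cap I_2$. By the same property for $Q_1$, it also appears on $N(x)\cap I_1$. Since $I_1\cap I_2=\emptyset$, these are two distinct vertices of $N[x]$ with colour $b$, which contradicts uniqueness. Hence $c(x)\neq c(y)$.
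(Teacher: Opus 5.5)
Your proof is correct and follows the paper's argument: the forcing property transfers from $Q$ to $G$ via $N_G[u]=N_Q[u]$, so a unique colour $b\ne a$ of $N[x]$ must have its only occurrence in $N(x)\cap I$, and with two gadgets whose inner sets are disjoint this colour would appear twice in $N[x]$. The paper phrases the second part as applying the first part to one gadget and noting that $N(x)\setminus I_1$ contains the inner neighbours of $x$ in the other gadget, which is the same observation.
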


\begin{proof}
Let $a=c(x)$. Since $N_G[u]=N_Q[u]$ for $u\in I$, both colours different from
$a$ appear on $N(x)\cap I$. The colour $a$ appears on $x$ and $y$, and $N[x]$ is
good. So some colour $b\ne a$ is unique in $N[x]$, and it appears in $I$. Hence
$b$ does not appear on $N(x)\setminus I$. For the second statement, apply this to
one of the gadgets. The set $N(x)\setminus I$ contains the inner neighbours of
$x$ in the other gadget, and both colours different from $a$ appear on them.
\end{proof}

\begin{theorem}\label{Tm}
Let $G_0$ be a planar graph. For every edge of $G_0$, choose one of its ends as
the apex $x$ and the other as $y$, and attach two copies of $W(x,y)$ whose rims
consist of new vertices. Let $G$ be the resulting graph. Then $G$ is planar, and
$G$ has a $3$-CF-colouring if and only if $G_0$ is $3$-colourable. In particular,
$\cf(G)=4$ if $\chi(G_0)=4$.
\end{theorem}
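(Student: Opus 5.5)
Planarity is immediate, and the two directions of the equivalence are handled separately.

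\emph{Planarity.} Each copy of $W(x,y)$ is a wheel on $x$ with rim $r_1r_2r_3r_4$, plus the edge $r_4y$. In a plane embedding of $G_0$ we draw both copies inside a thin region around the edge $xy$, one on each side of it, as in Figure~\ref{FigW}. Distinct edges get disjoint regions, so no crossings arise.

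\emph{If $G$ has a $3$-CF-colouring, then $G_0$ is $3$-colourable.} Let $c$ be a $3$-CF-colouring of $G$. For every edge $xy$ of $G_0$, the rim vertices of the two attached copies are new vertices, adjacent only to $x$, to other rim vertices of the same copy, and (for $r_4$) to $y$. Hence $N_G[u]=N_W[u]$ for every rim vertex $u$, so each copy is attached to $xy$ in the sense of Definition~\ref{D_force}. The two copies have disjoint inner sets, and each is a forcing gadget with apex $x$ by Lemma~\ref{L_rim}. The second part of Lemma~\ref{L_force} then gives $c(x)\ne c(y)$. Therefore the restriction of $c$ to $V(G_0)$ is a proper $3$-colouring of $G_0$.

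\emph{If $G_0$ is $3$-colourable, then $G$ has a $3$-CF-colouring.} This is the main step. Take a proper $3$-colouring $c$ of $G_0$ and colour every rim of every copy of $W(x,y)$ by the standard colouring (W1): all four rim vertices get the third colour $t\notin\{c(x),c(y)\}$.
\begin{itemize}
\item By (W1), $c(x)$ is unique in $N[r]$ for every rim vertex $r$, since $c(y)\ne c(x)$ and no rim vertex has colour $c(x)$.
\item For $v\in V(G_0)$, every rim neighbour of $v$ has a colour different from $c(v)$. If $v$ is the apex, the rim colour $t$ differs from $c(v)$. If $v$ is $y$ and is adjacent to $r_4$, then $t\ne c(y)$ as well.
\item Every neighbour of $v$ in $G_0$ also has a colour different from $c(v)$, because $c$ is proper.
\end{itemize}
So $c(v)$ is unique in $N_G[v]$, and every closed neighbourhood is good.

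Finally, if $\chi(G_0)=4$, then $G$ has no $3$-CF-colouring. Since $G$ is planar, $\cf(G)\le4$, and hence $\cf(G)=4$.

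The only point needing care is verifying that the attachment condition $N_G[u]=N_Q[u]$ holds, which it does because the rims consist of new vertices. Everything else follows directly from Lemmas~\ref{L_rim} and~\ref{L_force} and from (W1).
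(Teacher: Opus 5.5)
Your proposal is correct and follows the paper's proof essentially step for step: you get planarity by drawing the copies near each edge, the forward direction from Lemmas~\ref{L_rim} and~\ref{L_force} via the attachment condition, and the converse from the standard colouring (W1) together with the Four Colour Theorem. You also make explicit the check that the rims consist of new vertices, so $N_G[u]=N_W[u]$; the paper uses this only implicitly.
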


\begin{proof}
Both copies can be drawn close to the edge $xy$ (see Figure~\ref{FigW}), so $G$
is planar. Both copies are attached to $xy$. So by Lemmas~\ref{L_rim}
and~\ref{L_force} every $3$-CF-colouring of $G$ is a proper colouring of $G_0$.
Conversely, colour $G_0$ properly with three colours, and colour every copy of
$W(x,y)$ standardly. The rim vertices are then fine by (W1). The colour of every
vertex $v$ of $G_0$ is unique in $N[v]$, since the neighbours of $v$ in $G_0$ and
the rim vertices adjacent to $v$ have other colours. Finally, $\cf(G)\le4$ by the
Four Colour Theorem.
\end{proof}

Both copies are needed, and they must have the same apex. For $G_0=K_4$, the
graph has a $3$-CF-colouring if every edge $xy$ carries only one copy, or one copy
of $W(x,y)$ and one copy of $W(y,x)$.

\begin{corollary}\label{NP}
Deciding whether $\cf(G)\le3$ is NP-complete for planar graphs, even for planar
graphs of maximum degree at most $24$.
\end{corollary}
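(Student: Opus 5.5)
Membership in NP is immediate: a colouring with colours $1,2,3$ is a certificate, and checking that every closed neighbourhood $N[v]$ is good takes polynomial time. For hardness I will reduce from 3-colourability of planar graphs of maximum degree at most $4$, which is NP-complete (Garey, Johnson and Stockmeyer). Given such a $G_0$, I apply the construction of Theorem~\ref{Tm} and obtain $G$. This construction adds exactly $8$ new vertices per edge, so $G$ has polynomial size, and it is built in polynomial time. By Theorem~\ref{Tm}, $G$ is planar, and $\cf(G)\le 3$ holds if and only if $G_0$ is $3$-colourable. This is the reduction.

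The remaining task is the degree bound, and the main point is to choose the orientation of the edges of $G_0$, namely which end of each edge is the apex. A rim vertex has degree at most $4$: two rim neighbours, the apex, and possibly $y$. The degree of a vertex $v$ of $G_0$ in $G$ is $\deg_{G_0}(v)+8\,a(v)+2\,b(v)$. Here $a(v)$ is the number of edges at $v$ where $v$ is the apex, since each of the two copies contributes its four rim vertices. Also $b(v)$ is the number of edges where $v$ plays the role of $y$, since each copy contributes only $r_4$.

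If $v$ were the apex of all its edges, its degree could be $4+32=36$, so the orientation matters. I will orient $G_0$ so that every vertex is the apex of at most two of its edges. Such an orientation exists whenever $\Delta(G_0)\le4$. One way is to add a dummy vertex joined to all odd-degree vertices, take an Eulerian circuit of each component, and orient the edges along it. Then every vertex has in- and out-degree at most $\lceil \deg/2\rceil\le 2$ in $G_0$; the dummy edges count only for odd degrees, so the bound still holds. Alternatively, every planar graph has an orientation with out-degree at most $3$ by degeneracy, but the Eulerian argument gives the sharper bound.

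With $a(v)\le 2$ and $a(v)+b(v)=\deg_{G_0}(v)\le 4$, the degree of $v$ in $G$ is
\[
\deg_{G_0}(v)+8a(v)+2b(v)\le 4+16+4=24 .
\]
The only possible obstacle is making sure the cited NP-complete source problem really has maximum degree $4$. Planar 3-colourability with $\Delta\le4$ is indeed the standard Garey--Johnson--Stockmeyer result, and this is exactly what makes the bound come out to $24$.
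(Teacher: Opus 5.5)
Your proof is correct and follows the paper's argument essentially step for step. It reduces from planar $3$-colourability with maximum degree $4$ via Theorem~\ref{Tm}, chooses the apices along an Euler-tour orientation so that every vertex is the apex of at most two edges, and obtains the bound $4+16+4=24$. (One side remark is off, though you never use it: an orientation with out-degree at most $3$ does not follow ``by degeneracy'', since planar graphs are only $5$-degenerate; it follows instead from arboricity at most $3$.)
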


\begin{proof}
The problem is in NP. It is NP-complete to decide whether a planar graph $G_0$
of maximum degree at most $4$ is $3$-colourable \cite{GJS}. Orient the edges of
$G_0$ so that the in-degree and the out-degree of every vertex differ by at most
one. For example, join a new vertex to all vertices of odd degree and orient the
edges along an Euler tour. Let the tail of every edge be its apex. The graph $G$
of Theorem~\ref{Tm} has $|V(G_0)|+8|E(G_0)|$ vertices. Every vertex of $G_0$ is
the apex of at most two edges, so it has degree at most $4+2\cdot8+2\cdot2=24$ in
$G$. Rim vertices have degree at most $4$. By Theorem~\ref{Tm}, $\cf(G)\le3$ if
and only if $G_0$ is $3$-colourable.
\end{proof}

The graphs of Theorem~\ref{Tm} are far from critical.

\begin{proposition}\label{P_noncrit}
Let $G$ be a graph of Theorem~\ref{Tm} with $\chi(G_0)=4$, and let $F$ be a
nonempty set of rim vertices and edges of one of the attached copies of $W(x,y)$.
Then $\cf(G-F)=4$. In particular, $G$ is not critical.
\end{proposition}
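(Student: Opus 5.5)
Deleting $F$ damages one copy of $W(x,y)$ attached to the edge $xy$, say the copy with rim $r_1r_2r_3r_4$, while the other copy, with rim $r'_1r'_2r'_3r'_4$, stays intact. The plan is to show that $G-F$ still has no $3$-CF-colouring. Since $\cf(G-F)\le4$ by the Four Colour Theorem, this gives $\cf(G-F)=4$. Every other edge of $G_0$ still carries two intact copies of $W$ with a common apex. By Lemmas~\ref{L_rim} and~\ref{L_force}, every $3$-CF-colouring $c$ of $G-F$ is therefore proper on $G_0-xy$.

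Since $\chi(G_0)=4$, this forces $c(x)=c(y)=a$. Apply the first statement of Lemma~\ref{L_force} to the intact copy $Q'$. It is still attached to $xy$, because the rim vertices of $Q'$ have the same closed neighbourhoods in $G-F$ as in $G$. Hence some colour $b\ne a$ does not appear on $N_{G-F}(x)\setminus I'$. By Lemma~\ref{L_rim}, both colours $b$ and $d$ different from $a$ appear on $I'$, and $b$ is unique in $N[x]$. So $b$ appears exactly once on $r'_1,\dots,r'_4$ and nowhere else in $N(x)$.

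The main obstacle is the damaged copy. After deleting rim vertices or rim edges, it is no longer a forcing gadget in general. I would aim for a contradiction anyway, using the following. The surviving rim vertices of the damaged copy are adjacent to $x$, and possibly to $y$, only through the surviving edges. They lie in $N(x)\setminus I'$, so none of them has colour $b$. Thus they are coloured only with $a$ and $d$. I would then rerun the parity argument from the proof of Lemma~\ref{L_rim} on whatever remains of $r_1r_2r_3r_4$, using the fact that each surviving $N[r_j]$ is two-coloured with $a$ on $x$. This would require a case analysis by the type of $F$: deleted rim vertices, deleted rim edges, deleted edges $xr_j$, or the deleted edge $yr_4$.

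I am unsure the contradiction always exists, and a cleaner route may be better. The statement says $F$ is ``nonempty'' but places no restriction on it. For example, $F$ could be the whole rim of one copy, and then the statement would claim that a single copy per edge suffices. That contradicts the remark after Theorem~\ref{Tm}, which says that for $G_0=K_4$ a single copy per edge admits a $3$-CF-colouring; that remark concerns all edges, though, and here only one edge loses a copy. So I would first try the structural route. Take a $4$-vertex-critical subgraph of $G_0$ that avoids nothing special, or better, use that $G-F$ still contains, for the edge $xy$, the vertex $y$ adjacent to $x$. Try to show that the single intact copy together with the remaining structure still forces $c(x)\ne c(y)$. If the parity argument fails in some case, I would exhibit the needed non-existence by choosing $G_0$-independent reasoning, namely that $b$ unique forces a monochromatic $a,d$ rim fragment whose neighbourhoods violate goodness.
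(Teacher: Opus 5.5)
You set this up correctly: every other edge of $G_0$ is properly coloured, hence $c(x)=c(y)=a$, and the forcing lemma for the intact copy $Q'$ gives a colour $b\neq a$ that occurs on no vertex of $N_{G-F}(x)\setminus I'$. The gap is that you then look for the contradiction inside the damaged copy, and none exists there. If $F$ is the whole rim, there is nothing left to analyse. For a single deleted element, apply Lemma~\ref{L_dam} with the roles of $b$ and $d$ exchanged. It colours the damaged rim with $c(x)=c(y)=a$ so that every closed neighbourhood of its rim vertices is good, and no rim vertex adjacent to $x$ or $y$ gets colour $b$. So no parity argument or case analysis on what remains of $r_1r_2r_3r_4$ can succeed. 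Your doubt about the case where $F$ is the whole rim is also unfounded. The remark after Theorem~\ref{Tm} is about removing a copy from every edge, whereas here only $xy$ loses a copy.

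The missing step uses $\chi(G_0)=4$ a second time, at $x$. Since $F$ contains no edge of $G_0$, the set $N_{G_0}(x)\setminus\{y\}$ lies in $N_{G-F}(x)\setminus I'$. So no neighbour of $x$ in $G_0$ has colour $b$; for $y$ this holds because $c(y)=a$. As $c$ is proper on $G_0-xy$, recolouring $x$ with $b$ gives a proper $3$-colouring of $G_0$, a contradiction.

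This is the paper's argument, stated there in contrapositive form. If $x$ missed a colour other than $a$ on $N_{G_0}(x)\setminus\{y\}$, one could recolour $x$. So both such colours appear there, outside the rim of the intact copy, contradicting Lemma~\ref{L_force}. The damaged copy is never used, which is why $F$ may be any nonempty set. Taking $F$ to be a single element then shows that $G$ is not critical.
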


\begin{proof}
The graph $G-F$ is planar, so $\cf(G-F)\le4$. Suppose that $G-F$ has a
$3$-CF-colouring $c$. Let $F$ lie in a copy of $W(x,y)$ attached to the edge $xy$
of $G_0$, and let $W'$ be the other copy attached to $xy$. Let $x'y'\ne xy$ be an
edge of $G_0$. Both copies of $W(x',y')$ are attached to $x'y'$ in $G-F$. So
$c(x')\ne c(y')$ by Lemmas~\ref{L_rim} and~\ref{L_force}. Since $G_0$ has no
proper $3$-colouring, $c(x)=c(y)=a$. Every neighbour of $x$ in $G_0$ other than
$y$ has a colour different from $a$. Suppose that $x$ has no neighbour of colour
$b$ in $G_0$, for some colour $b\ne a$. Then recolouring $x$ with $b$ gives a
proper $3$-colouring of $G_0$, since $c(y)=a\ne b$. Hence both colours different
from $a$ appear on $N_{G_0}(x)\setminus\{y\}$. This set is contained in
$N_{G-F}(x)$ and is disjoint from the rim of $W'$. This contradicts
Lemma~\ref{L_force} applied to $W'$, which is attached to $xy$ in $G-F$.
\end{proof}

\section{A counterexample on 25 vertices}\label{SecG25}

Let $G_{25}$ be the planar graph on $25$ vertices and $55$ edges shown in
Figure~\ref{FigG25}. It consists of a $K_4$ on the \emph{hubs} $0,1,2,3$ and five
gadgets. We denote their vertex sets by
\[
A=\{4,\dots,8\},\quad B=\{9,\dots,13\},\quad C=\{14,\dots,17\},\quad
D=\{18,\dots,22\},\quad E=\{23,24\}.
\]
Table~\ref{TabN} lists the neighbourhoods of the gadget vertices. The gadget $C$
is a copy of $W(1,3)$ with $(r_1,r_2,r_3,r_4)=(14,16,15,17)$. Let $G'_{25}$ be
obtained from $G_{25}$ by deleting the three edges $0\,18$, $0\,20$ and $1\,8$.
These edges are dashed in Figure~\ref{FigG25}. Let $G''_{25}$ be obtained from
$G'_{25}$ by replacing the gadget $A$ by the gadget in the last row of
Table~\ref{TabN}. The graphs $G'_{25}$ and $G''_{25}$ have $25$ vertices, and they
have $52$ and $51$ edges, respectively. A straight-line plane drawing of
$G''_{25}$ is obtained from Figure~\ref{FigG25} (left) by replacing $A$ with the
drawing on the right and deleting the dashed edges.

\begin{figure}[t]
\centering
\begin{minipage}[b]{0.52\textwidth}\centering
\begin{tikzpicture}[scale=0.48,
  hub/.style={circle,draw=black,fill=red!80!black,text=white,inner sep=0pt,minimum size=11pt,font=\scriptsize\bfseries},
  gv/.style={circle,draw=black,fill=white,inner sep=0pt,minimum size=8.5pt,font=\fontsize{5}{5}\selectfont}]
\coordinate (p0) at (0,0);
\coordinate (p1) at (13,0);
\coordinate (p2) at (8,12);
\coordinate (p3) at (4.5,5.6);
\coordinate (p4) at (4,2.1);
\coordinate (p5) at (9,0.94);
\coordinate (p6) at (6,0.9);
\coordinate (p7) at (6,4.0);
\coordinate (p8) at (6,2.4);
\coordinate (p9) at (3.0,6.6);
\coordinate (p10) at (4.0,9.0);
\coordinate (p11) at (3.2,10.3);
\coordinate (p12) at (2.9,5.2);
\coordinate (p13) at (0.6,7.8);
\coordinate (p14) at (9.8,5.4);
\coordinate (p15) at (8.3,4.6);
\coordinate (p16) at (10.0,3.6);
\coordinate (p17) at (6.9,5.3);
\coordinate (p18) at (6,-4.0);
\coordinate (p19) at (9,-0.94);
\coordinate (p20) at (6,-0.9);
\coordinate (p21) at (4,-2.1);
\coordinate (p22) at (6,-2.4);
\coordinate (p23) at (9.0,8.3);
\coordinate (p24) at (6.6,8.6);
\draw[gA,line width=0.55pt] (p0)--(p4) (p0)--(p6) (p0)--(p7) (p0)--(p8) (p1)--(p5) (p1)--(p6) (p1)--(p7) (p4)--(p7) (p4)--(p8) (p5)--(p8) (p6)--(p8);
\draw[gA,line width=0.55pt,dashed] (p1)--(p8);
\draw[gB,line width=0.55pt] (p0)--(p9) (p0)--(p12) (p0)--(p13) (p2)--(p9) (p2)--(p10) (p2)--(p11) (p2)--(p13) (p9)--(p12) (p9)--(p13) (p10)--(p13) (p11)--(p13);
\draw[gC,line width=0.55pt] (p1)--(p14) (p1)--(p15) (p1)--(p16) (p1)--(p17) (p3)--(p17) (p14)--(p16) (p14)--(p17) (p15)--(p16) (p15)--(p17);
\draw[gD,line width=0.55pt] (p0)--(p21) (p0)--(p22) (p1)--(p18) (p1)--(p19) (p1)--(p20) (p1)--(p22) (p18)--(p21) (p18)--(p22) (p19)--(p22) (p20)--(p22);
\draw[gD,line width=0.55pt,dashed] (p0)--(p18) (p0)--(p20);
\draw[gE,line width=0.55pt] (p1)--(p23) (p2)--(p23) (p2)--(p24) (p3)--(p24) (p23)--(p24);
\draw[line width=1.3pt] (p0)--(p1) (p0)--(p2) (p0)--(p3) (p1)--(p2) (p1)--(p3) (p2)--(p3);
\node[hub] at (p0) {0};
\node[hub] at (p1) {1};
\node[hub] at (p2) {2};
\node[hub] at (p3) {3};
\node[gv] at (p4) {4};
\node[gv] at (p5) {5};
\node[gv] at (p6) {6};
\node[gv] at (p7) {7};
\node[gv] at (p8) {8};
\node[gv] at (p9) {9};
\node[gv] at (p10) {10};
\node[gv] at (p11) {11};
\node[gv] at (p12) {12};
\node[gv] at (p13) {13};
\node[gv] at (p14) {14};
\node[gv] at (p15) {15};
\node[gv] at (p16) {16};
\node[gv] at (p17) {17};
\node[gv] at (p18) {18};
\node[gv] at (p19) {19};
\node[gv] at (p20) {20};
\node[gv] at (p21) {21};
\node[gv] at (p22) {22};
\node[gv] at (p23) {23};
\node[gv] at (p24) {24};
\node[gA,font=\small\bfseries] at (7.9,2.4) {A};
\node[gD,font=\small\bfseries] at (7.9,-2.4) {D};
\node[gB,font=\small\bfseries] at (1.2,9.6) {B};
\node[gC,font=\small\bfseries] at (8.4,6.4) {C};
\node[gE,font=\small\bfseries] at (7.8,9.4) {E};
\end{tikzpicture}
\end{minipage}\hfill
\begin{minipage}[b]{0.44\textwidth}\centering
\begin{tikzpicture}[scale=0.48,
  hub/.style={circle,draw=black,fill=red!80!black,text=white,inner sep=0pt,minimum size=11pt,font=\scriptsize\bfseries},
  gv/.style={circle,draw=black,fill=white,inner sep=0pt,minimum size=8.5pt,font=\fontsize{5}{5}\selectfont}]
\coordinate (p0) at (0,0); \coordinate (p1) at (13,0);
\coordinate (p4) at (6,4.0); \coordinate (p5) at (9,1.6); \coordinate (p6) at (9,0.9);
\coordinate (p7) at (4,3.3); \coordinate (p8) at (6,2.4);
\draw[gA,line width=0.55pt] (p0)--(p4) (p0)--(p7) (p0)--(p8) (p1)--(p4) (p1)--(p5)
  (p1)--(p6) (p4)--(p7) (p4)--(p8) (p5)--(p8) (p6)--(p8);
\draw[line width=1.3pt] (p0)--(p1);
\node[hub] at (p0) {0}; \node[hub] at (p1) {1};
\foreach \i in {4,...,8} \node[gv] at (p\i) {\i};
\node[gA,font=\small\bfseries] at (7.9,3.2) {A};
\end{tikzpicture}
\\[1.2cm]\end{minipage}
\caption{Left: the planar graph $G_{25}$ with $\cf(G_{25})=4$. The hubs $0,1,2,3$
form the thick $K_4$. The gadgets $A$ and $D$ are on the hubs $0,1$, the gadget
$B$ is on the hubs $0,2$, and the gadgets $C=W(1,3)$ and $E$ are on the hubs
$1,2,3$. Deleting the three dashed edges gives $G'_{25}$. Right: the gadget $A$ of $G''_{25}$, with the hubs $0,1$ at the same
positions.}
\label{FigG25}
\end{figure}

\begin{table}[htb]
\centering\small
\begin{tabular}{@{}l@{\quad}l@{}}
\hline
gadget & vertex $v$: $N(v)$\\
\hline
$A$ & $4{:}\ 0,7,8$;\quad $5{:}\ 1,8$;\quad $6{:}\ 0,1,8$;\quad $7{:}\ 0,1,4$;\quad $8{:}\ 0,(1),4,5,6$\\
$B$ & $9{:}\ 0,2,12,13$;\quad $10{:}\ 2,13$;\quad $11{:}\ 2,13$;\quad $12{:}\ 0,9$;\quad $13{:}\ 0,2,9,10,11$\\
$C$ & $14{:}\ 1,16,17$;\quad $15{:}\ 1,16,17$;\quad $16{:}\ 1,14,15$;\quad $17{:}\ 1,3,14,15$\\
$D$ & $18{:}\ (0),1,21,22$;\quad $19{:}\ 1,22$;\quad $20{:}\ (0),1,22$;\quad $21{:}\ 0,18$;\quad $22{:}\ 0,1,18,19,20$\\
$E$ & $23{:}\ 1,2,24$;\quad $24{:}\ 2,3,23$\\
\hline
$A$ in $G''_{25}$ & $4{:}\ 0,1,7,8$;\quad $5{:}\ 1,8$;\quad $6{:}\ 1,8$;\quad $7{:}\ 0,4$;\quad $8{:}\ 0,4,5,6$\\
\hline
\end{tabular}
\caption{The neighbourhoods of the gadget vertices of $G_{25}$. The edges to the
hubs in parentheses are the three edges deleted in $G'_{25}$. The last row gives
the gadget $A$ of $G''_{25}$.}
\label{TabN}
\end{table}

\begin{theorem}\label{T25}
Let $G$ be obtained from $G_{25}$ by deleting any subset of the three edges
$0\,18$, $0\,20$, $1\,8$. Then $G$ is planar and $\cf(G)=4$. In particular,
$\cf(G_{25})=\cf(G'_{25})=4$.
\end{theorem}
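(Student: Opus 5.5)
Since $G$ arises from the plane graph $G_{25}$ of Figure~\ref{FigG25} by deleting edges, it is planar, and $\cf(G)\le4$ by the Four Colour Theorem. So the whole task is to show that no $G$ in the family (eight graphs, from $G'_{25}$ up to $G_{25}$) has a $3$-CF-colouring. I will suppose that $c$ is a $3$-CF-colouring of such a $G$ and derive a contradiction. The argument should be uniform in the subset of the deleted edges $0\,18$, $0\,20$, $1\,8$. These edges affect only $N[8]$, $N[18]$, $N[20]$ and the hub neighbourhoods $N[0]$ and $N[1]$. Every statement about gadgets $A$ and $D$ will therefore be proved for both versions of the affected vertices.

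The first step is a collection of local lemmas, one per gadget. Each lemma says what the gadget forces on its hubs, using only that $N[u]$ is good for the gadget vertices $u$.
\begin{itemize}
\item[(C)] For $C=W(1,3)$, Lemmas~\ref{L_rim} and~\ref{L_force} give the following. If $c(1)=c(3)$, then some colour different from $c(1)$ does not appear on $N(1)\setminus C$. Otherwise both colours different from $c(1)$ already appear on $C\subseteq N(1)$.
\item[(A), (D)] For the gadgets $A$ and $D$ on the hubs $0,1$, I expect a lemma of the following form. Either $c(0)\ne c(1)$, or else specific colours are forced onto $N(0)\cap A$ and $N(1)\cap A$ (respectively $D$). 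I will obtain it by a short case analysis. The degree-$2$ vertices $5$, $19$, $21$, $12$ have closed neighbourhoods of size $3$, which are therefore non-monochromatic. This pins down the central vertices $8$ and $22$.
\item[(B)] For the gadget $B$ on the hubs $0,2$, the vertices $10$ and $11$ have degree $2$, and the same reasoning applies.
\item[(E)] For $E$, the vertices $23$ and $24$ have closed neighbourhoods $\{1,2,23,24\}$ and $\{2,3,23,24\}$. These constrain the colours of $23$ and $24$ relative to the hubs $1,2,3$.
\end{itemize}

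The second step is the global case analysis. The hubs form a $K_4$ coloured with three colours, so at least two hubs share a colour. I will split into cases according to which pair (or pairs) of hubs is monochromatic. In each case I combine the gadget lemmas with the goodness of the hub neighbourhoods $N[0]$, $N[1]$, $N[2]$, $N[3]$. These are large sets, so goodness means a colour appears exactly once in them. The gadget lemmas show that every colour other than the repeated hub colour appears at least twice in the relevant hub neighbourhood. The repeated hub colour itself appears at least twice because of the hubs, so that neighbourhood is not good.
\begin{itemize}
\item The pair $\{1,3\}$ is handled by (C) together with the colours that $A$, $D$ and $E$ contribute to $N(1)$.
\item The pair $\{0,1\}$ is handled by $A$ and $D$, which presumably play the role of two forcing copies on the edge $01$.
\item The pair $\{0,2\}$ is handled by $B$ combined with what $A$ and $D$ put into $N(0)$.
\item The pairs $\{1,2\}$ and $\{2,3\}$ are handled by $E$ together with $B$ and $C$, acting on $N(2)$ or $N(1)$.
\item The pair $\{0,3\}$ is the least directly covered. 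Hub $3$ has only $17$ and $24$ outside the $K_4$. So I expect to argue through $N[3]$ together with the colours forced on $17$ by $C$ and on $24$ by $E$.
\end{itemize}

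The main obstacle is making the lemmas for $A$ and $D$ strong enough when the dashed edges are absent, because then $8$, $18$ and $20$ see fewer hubs. I will first derive the lemmas in the weakest version, with all three edges deleted, which is the graph $G'_{25}$. I will then check that adding back any of the edges $0\,18$, $0\,20$, $1\,8$ cannot create a colouring. An added edge enlarges $N[0]$ or $N[1]$ only by vertices whose colours the lemma already controls, and it alters $N[8]$, $N[18]$, $N[20]$ in a way I will verify separately. If a uniform argument for the pair $\{0,3\}$ or for the altered vertices proves too messy, the fallback is an exhaustive check of the $3^{25}$ colourings of each of the eight graphs. This check can be pruned by symmetry of colours and by first fixing the hub colours up to permutation.
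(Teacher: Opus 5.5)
Your proposal is an outline rather than a proof. The gadget lemmas are only announced (``I expect a lemma of the following form''), the case analysis is a list of intentions, and the fallback is a computer search that is not carried out. The missing pieces are the substance of the paper's proof:
\begin{itemize}
\item Lemma~\ref{LD} for the $D$-gadgets $B$ and $D$. The gadget together with its two hubs receives all three colours. If a colour outside $\{c_u,c_v\}$ is missing on $X(v)$, then both colours different from $c_v$ appear on $X(u)$.
\item Its analogue Lemma~\ref{LA} for $A$, and Lemma~\ref{LE} for $E$.
\item A nine-subcase analysis by the colours of the hubs $1,2,3$ and then of $0$.
\end{itemize}
These lemmas are stated only in terms of $X(u)$ and $X(v)$, and they are proved for both forms of $N[8]$, $N[18]$, $N[20]$. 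That is how all eight graphs are treated at once.

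Several of your concrete guesses would also fail.

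\emph{$A$ and $D$ are not two forcing copies on the edge $01$.} Each only puts both colours different from $c_0$ on $X(0)$ \emph{or} on $X(1)$. So no argument like Lemma~\ref{L_force} excludes $c_0=c_1$. Indeed, take the subgraph induced by the hubs and $A\cup D$, in any of the eight versions. Colour the hubs $0,1,2,3$ by $1,1,2,1$, the vertices $4,\dots,8$ by $3,1,1,1,2$, and the vertices $18,\dots,22$ by $2,3,2,1,1$. Then every closed neighbourhood is good. The paper excludes $c_0=c_1$ only with help from elsewhere: $C$ in Case~1.2, $B$ and $E$ in Cases~2.2 and~2.3, and the hub colours on $2,3$ in Case~3.1.

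\emph{Your statement (C) is false for $c(1)\ne c(3)$.} The standard colouring (W1) puts a single colour on the whole rim. So $C$ gives nothing when $c_1\ne c_3$.

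\emph{Attacking $c_0=c_3$ through $N[3]$ is misdirected.} The paper never uses that $N[3]$ is good, which is exactly what yields Proposition~\ref{P_induced}. When $c_0=c_3$, the contradiction comes from $N[0]$, $N[1]$, $N[2]$, or even from a gadget vertex. In Case~2.3, $d$ is shown to be unique in each of $N[0]$, $N[1]$, $N[2]$. This forces $d$ off $B\cup\{0,2\}$, contrary to Lemma~\ref{LD}(a). In Case~3.3 the final contradiction is at $N[9]$.

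So ``some hub neighbourhood has every colour at least twice'' is not the uniform mechanism you suggest.
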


We prove Theorem~\ref{T25} after some setup and three lemmas.

\paragraph{Setup.} Figure~\ref{FigG25} is a straight-line plane drawing of
$G_{25}$. So every such $G$ is planar and $\cf(G)\le4$. For the lower bound, fix
such a graph $G$, let $c$ be a $3$-CF-colouring of $G$, and write $c_i=c(i)$. For
a gadget $X$ and a hub $i$ let $X(i)=N(i)\cap X$. Then $C(1)=C$, $23\in N(1)$ and
$23,24\in N(2)$. For a hub $i$, the set $N[i]$ is the union of $\{0,1,2,3\}$ and
the sets $X(i)$. For a hub $i$ and a colour $\ell$ let $\mu_i(\ell)$ be the number
of vertices of colour $\ell$ in $N[i]$. Note that the multiset of colours on
$\{0,1,2,3\}\subseteq N[i]$ is the same for all hubs. In
Lemmas~\ref{LD}--\ref{LE}, $G$, $c$ and the notation are as in this setup. These
lemmas use only the closed neighbourhoods of the gadget vertices, and these
consist of gadget vertices and hubs.

\paragraph{$D$-gadgets.} Let $u,v$ be hubs. A \emph{$D$-gadget on $(u,v)$} is a
gadget $X$ with five vertices $s,t_1,t_2,w,z$ such that
\[
N[z]=\{u,v,s,t_1,t_2,z\},\quad N[t_1]=\{v,t_1,z\},\quad N[w]=\{u,s,w\},
\]
$N[s]\setminus\{u\}=\{v,s,w,z\}$ and $N[t_2]\setminus\{u\}=\{v,t_2,z\}$. The hub
$u$ may or may not be adjacent to $s$ and to $t_2$. Thus $X(v)=X\setminus\{w\}$
and $w,z\in X(u)$, so $X=X(u)\cup X(v)$. In all eight graphs, $D$ is a $D$-gadget
on $(0,1)$ with $(s,t_1,t_2,w,z)=(18,19,20,21,22)$. Also, $B$ is a $D$-gadget on
$(0,2)$ with $(s,t_1,t_2,w,z)=(9,10,11,12,13)$, where $0$ is adjacent to $9$ but
not to $11$. So $B(0)=\{9,12,13\}$ and $B(2)=\{9,10,11,13\}$.

\begin{lemma}\label{LD}
Let $X$ be a $D$-gadget on $(u,v)$.

\textup{(a)} The set $X\cup\{u,v\}$ receives all three colours.

\textup{(b)} If a colour $d\notin\{c_u,c_v\}$ is missing on $X(v)$, then $c(w)=d$
and $c(z)$ is the colour different from $c_v$ and $d$. In particular, $c(X(u))$
contains both colours different from $c_v$.
\end{lemma}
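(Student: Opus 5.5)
The plan is to prove (b) first and then obtain (a) from it. Throughout, let $d\notin\{c_u,c_v\}$ be missing on $X(v)=\{s,t_1,t_2,z\}$, and let $e$ be the colour different from $c_v$ and $d$. Then every vertex of $\{u,v,s,t_1,t_2,z\}$ has colour $c_v$ or $e$; for $u$ this holds because $c_u\neq d$. So each relevant closed neighbourhood, except possibly $N[w]$ and $N[s]$ (which contain $w$), is two-coloured. We can therefore use the fact from the introduction: such a set is good if and only if one colour appears exactly once, and a $3$-set is good if and only if it is not monochromatic.

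\emph{Step 1: $c(z)=e$.} Suppose instead that $c(z)=c_v$. Since $N[t_1]=\{v,t_1,z\}$ is good, $c(t_1)=e$. In the six-element two-coloured set $N[z]$, the colour $c_v$ already occurs on $v$ and $z$, and $e$ occurs on $t_1$. Hence $e$ must be the unique colour, which forces $c(u)=c(s)=c(t_2)=c_v$. But then $N[t_2]$, which is $\{v,t_2,z\}$ or $\{u,v,t_2,z\}$, is monochromatic, a contradiction.

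\emph{Step 2: $c(w)=d$.} Suppose $c(w)\neq d$. Then $N[s]$ is also two-coloured, and it contains $v$ (colour $c_v$) and $z$ (colour $e$). In $N[z]$ both colours already appear on $v$ and $z$, so the remaining four vertices $u,s,t_1,t_2$ are all $e$ or all $c_v$.
\begin{itemize}
\item If all four are $e$, then goodness of $N[w]=\{u,s,w\}$ gives $c(w)=c_v$. Now $N[s]=\{u,v,s,w,z\}$ or $\{v,s,w,z\}$ has colour counts $3{:}2$ or $2{:}2$, so it is not good.
\item If all four are $c_v$, then $N[w]$ gives $c(w)=e$. Now $N[s]$ has counts $3{:}2$ or $2{:}2$ in $c_v$ and $e$, again not good.
\end{itemize}
Either way we reach a contradiction, so $c(w)=d$. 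Since $w,z\in X(u)$ carry the colours $d$ and $e$, the set $c(X(u))$ contains both colours different from $c_v$, which is the final claim of (b).

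\emph{Step 3: (a).} Suppose $X\cup\{u,v\}$ receives at most two colours. Then some colour $d$ is absent from the whole set, so $d\notin\{c_u,c_v\}$ and $d$ is missing on $X(v)$. By (b), $c(w)=d$, contradicting that $d$ is absent.

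The main obstacle is the case analysis in Step 2. There the optional adjacencies of $u$ to $s$ and $t_2$ must be handled uniformly, and this is what the counts $3{:}2$ versus $2{:}2$ take care of.
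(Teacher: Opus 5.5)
Your proof is correct. It uses the same local contradictions as the paper, but it reverses the logical order of (a) and (b).

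The paper proves (a) first, splitting on whether $c_u=c_v$. It then gets $c(w)=d$ in (b) for free from (a), because $w$ is the only vertex of $X\cup\{u,v\}$ outside $X(v)\cup\{u,v\}$. It proves $c(z)=p$ separately, splitting on whether $c_u=p$ or $c_u=c_v$.

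You instead determine $c(z)=e$ once, under the hypothesis of (b) alone. Your uniqueness argument in $N[z]$ also avoids the split on $c_u$. With $c(z)$ known, the (a)-type argument becomes a single split on which colour is unique in $N[z]$. Your cases ``all four $c_v$'' and ``all four $e$'' are exactly the paper's cases $c_u=c_v$ and $c_u\neq c_v$, since $u$ is among the four vertices. Both end in the same clash between $N[w]$ and $N[s]$. Then (a) follows in one line.

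Your ordering is slightly more economical, since the paper effectively pins down $c(z)$ inside each case of (a) and again in (b). The paper's ordering has the advantage of matching Lemma~\ref{LA}, where (a) is proved first and then used in both (b) and (c).
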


\begin{proof}
(a) Suppose that only two colours appear on $X\cup\{u,v\}$. First let
$c_u=c_v=a$, and let $b$ be the other colour. By $N[z]$, exactly one of
$s,t_1,t_2,z$ has colour $b$. If $c(z)=a$, then $N[t_1]$ and $N[t_2]$ force
$c(t_1)=c(t_2)=b$, which is impossible. So $c(z)=b$ and $c(s)=a$. Then $N[s]$
forces $c(w)=a$, and $N[w]$ is monochromatic. Now let $c_u\ne c_v$. Then $N[z]$
is coloured by $c_u$ and $c_v$, and each of them appears on a hub. So
$s,t_1,t_2,z$ have a common colour, and this colour is $c_u$ by $N[t_1]$. Then
$N[w]$ gives $c(w)=c_v$. So $N[s]$ contains $c_u$ on $s,z$ and $c_v$ on $v,w$,
and it is not good.

(b) By (a), $d$ appears on $X\setminus X(v)=\{w\}$. Let $p$ be the colour
different from $c_v$ and $d$. So $X(v)$ is coloured by $c_v$ and $p$. Suppose
that $c(z)=c_v$. Then $N[t_1]$ forces $c(t_1)=p$. Since $c_u\ne d$, either
$c_u=p$ or $c_u=c_v$. If $c_u=p$, then $N[z]$ contains $p$ on $u,t_1$ and $c_v$
on $v,z$. If $c_u=c_v$, then $N[t_2]$ forces $c(t_2)=p$, and $N[z]$ contains $p$
on $t_1,t_2$ and $c_v$ on $u,v,z$. In both cases $N[z]$ is not good. Hence
$c(z)=p$. For the last statement note that $w,z\in X(u)$.
\end{proof}

\paragraph{The gadget $A$.} We have $A(0)=\{4,6,7,8\}$, $A(1)\setminus A(0)=\{5\}$
and $\{5,6,7\}\subseteq A(1)\subseteq\{5,6,7,8\}$. So $A=A(0)\cup A(1)$. The
arguments below hold for both possible forms of $N[8]$.

\begin{lemma}\label{LA}
\textup{(a)} The set $A\cup\{0,1\}$ receives all three colours.

\textup{(b)} If a colour $d\notin\{c_0,c_1\}$ is missing on $A(0)$, then
$c(A(1))$ contains both colours different from $c_0$.

\textup{(c)} If $c_0=c_1$ and a colour $d\ne c_0$ is missing on $A(1)$, then
$c(A(0))$ contains both colours different from $c_0$.
\end{lemma}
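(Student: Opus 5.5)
The plan is a direct case analysis on the closed neighbourhoods $N[4],\dots,N[8]$, using Table~\ref{TabN}: $N[4]=\{0,4,7,8\}$, $N[5]=\{1,5,8\}$, $N[6]=\{0,1,6,8\}$, $N[7]=\{0,1,4,7\}$, and $N[8]=\{0,4,5,6,8\}$ with $1$ possibly added. Two facts from the introduction will be used throughout. A two-coloured set is good if and only if one colour occurs exactly once or exactly $|S|-1$ times. A $3$-set is good if and only if it is not monochromatic. Each part will be proved by contradiction, and the argument will not depend on whether the edge $1\,8$ is present.

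For (a), suppose only colours $a,b$ occur on $A\cup\{0,1\}$.
\begin{itemize}
\item If $c_0=c_1=a$, then $N[6]$ forces exactly one of $6,8$ to have colour $a$, and $N[7]$ forces exactly one of $4,7$ to have colour $a$. Then $N[4]$ forces $c(8)=a$, hence $c(6)=b$, and $N[5]$ forces $c(5)=b$.
\item If $c_0=a\ne c_1=b$, then $N[6]$ gives $c(6)=c(8)$ and $N[7]$ gives $c(4)=c(7)$. Then $N[4]$ forces $c(8)=b$, so $c(6)=b$, and $N[5]$ gives $c(5)=a$.
\end{itemize}
In both cases the colours of $0,5,6,8$ (and of $1$, if $1\in N[8]$) are now fixed, and $4$ is the only free vertex of $N[8]$. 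A short count shows that neither colour of $4$ makes the counts in $N[8]$ equal to $1$ or $|N[8]|-1$, for $|N[8]|=5$ and for $|N[8]|=6$. I expect this final count in $N[8]$, with its two sizes, to be the only fiddly point.

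For (b), the colour $d$ must appear somewhere by (a). The only vertex of $A\cup\{0,1\}$ outside $A(0)\cup\{0,1\}$ is $5$, so $c(5)=d$, and $5\in A(1)$. Let $q$ be the third colour, so $A(0)$ is coloured by $c_0$ and $q$, and $c_1\in\{c_0,q\}$. It suffices to find a vertex of colour $q$ in $A(1)$. Suppose there is none. Then $c(6)=c(7)=c_0$, and the same holds for $8$ if $8\in A(1)$.
\begin{itemize}
\item If $c_1=q$, then $N[6]$ forces $c(8)=c_0$, and $N[6]$ is coloured $c_0,q,c_0,c_0$. This is good, so I recheck: $N[6]=\{0,1,6,8\}$ then has $q$ unique, which is not yet a contradiction. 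Instead I use $N[6]$ the other way. If $8\notin A(1)$ and $c(8)=q$, then $N[6]$ has colours $c_0,q,c_0,q$ and is bad. So $c(8)=c_0$, and then $8\in A(1)$ or not, the vertices $6,8$ together with $N[4]$ and $N[7]$ must be checked directly for a bad set.
\item If $c_1=c_0$, then $N[6]$ is monochromatic apart from $8$, so $c(8)=q$ (and $8\notin A(1)$). Next $N[7]$ forces $c(4)=q$, and then $N[4]$ has colours $c_0,q,c_0,q$, so it is bad.
\end{itemize}
In the first bullet the plan is to pin down $c(8)$ from $N[6]$ and then finish by checking $N[4]$ and $N[7]$; the exact bad set there still needs to be worked out.

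For (c), let $c_0=c_1=a$ and let $q$ be the third colour. By (a), $d$ occurs on $A\setminus A(1)\subseteq\{4,8\}\subseteq A(0)$. Suppose $q$ does not occur on $A(0)$. Since $6,7\in A(0)\cap A(1)$ avoid both $d$ and $q$, we get $c(6)=c(7)=a$. Then $N[6]$ forces $c(8)=d$ and $N[7]$ forces $c(4)=d$. Now $N[4]$ is coloured $a,d,a,d$ and is not good, a contradiction.
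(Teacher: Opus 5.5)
Your parts (a) and (c) are correct and follow the paper's argument closely. In (a), the final check of $N[8]$ that you flag as fiddly is immediate: among $0,5,6,8$ each of the two colours already occurs twice. So no colour is unique in $N[8]$, whatever $c(4)$ is and whether or not $1\in N[8]$.

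The gap is in (b), in the subcase $c_1=q$. You leave it unfinished yourself (``the exact bad set there still needs to be worked out''), and the text there contradicts itself: you first present $N[6]$ as the bad set and then note that it is good. The missing step is short. You correctly obtain $c(8)=c_0$, from $N[6]$ or directly if $8\in A(1)$. Then $N[7]=\{0,1,4,7\}$ is coloured $c_0,q,c(4),c_0$ with $c(4)\in\{c_0,q\}$. The choice $c(4)=q$ would give a $2$--$2$ split, so $c(4)=c_0$. Now $N[4]=\{0,4,7,8\}$ is monochromatic, a contradiction. With this added, (b) is complete.

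You can also avoid splitting on $c_1$, as the paper does. Once $c(6)=c(7)=c_0$, each of $N[6]$, $N[7]$, $N[4]$ contains $c_0$ at least twice. Their remaining pairs $\{1,8\}$, $\{1,4\}$, $\{4,8\}$ are coloured from $\{c_0,q\}$, so each pair must contain exactly one vertex of colour $q$. The first two conditions give $c(8)=q\iff c(4)=q$, which contradicts the third.
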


\begin{proof}
(a) Suppose that only two colours appear on $A\cup\{0,1\}$. First let
$c_0=c_1=a$, and let $b$ be the other colour. Then $N[6]=\{0,1,6,8\}$ and
$N[7]=\{0,1,4,7\}$ force $c_6\neq c_8$ and $c_4\neq c_7$. So $a$ appears at least
twice in $N[8]$, on $0$ and on $\{6,8\}$. Hence $b$ is unique in $N[8]$. This
forces $c_4=c_5=a$, and so $c_7=b$. Now $N[4]=\{0,4,7,8\}$ is coloured
$a,a,b,c_8$. This forces $c_8=a$, and then $N[5]=\{1,5,8\}$ is monochromatic. Now
let $c_0=a\neq c_1=b$. Then $N[6]$ and $N[7]$ force $c_6=c_8$ and $c_4=c_7$. The
set $N[4]$ is coloured $a,c_4,c_4,c_8$, so $c_8=b$. Then $c_6=b$, and $N[5]$
forces $c_5=a$. So $N[8]$ contains $0,5$ of colour $a$ and $6,8$ of colour $b$,
and it is not good.

(b) Let $a=c_0$, and let $b$ be the colour different from $a$ and $d$. By (a),
$d$ appears on $A$. Hence $d$ appears on $5$, the only vertex of
$A\setminus A(0)$. Suppose that $b\notin\{c_6,c_7\}$. Since $6,7\in A(0)$, we get
$c_6=c_7=a$, and $c_1,c_4,c_8\in\{a,b\}$. Now $a$ appears at least twice in each
of $N[6]=\{0,1,6,8\}$, $N[7]=\{0,1,4,7\}$ and $N[4]=\{0,4,7,8\}$. So each of these
sets is good only if exactly one of its two remaining vertices has colour $b$.
These vertices are $1,8$ for $N[6]$, $1,4$ for $N[7]$, and $4,8$ for $N[4]$. The
first two conditions give that $c_8=b$ if and only if $c_4=b$. This contradicts
the third condition. Hence $b\in\{c_6,c_7\}\subseteq c(A(1))$ and
$d=c_5\in c(A(1))$.

(c) Let $a=c_0=c_1$, and let $b$ be the colour different from $a$ and $d$. By
(a), $d$ appears on $A$. Since $5,6,7\in A(1)$, we have $d\in\{c_4,c_8\}$ and
$c_5,c_6,c_7\in\{a,b\}$. If $b\in\{c_6,c_7\}$, we are done, since
$4,6,7,8\in A(0)$. Otherwise $c_6=c_7=a$. Then $N[6]$ and $N[7]$ give $c_8\neq a$
and $c_4\ne a$. The set $N[4]$ is coloured $a,c_4,a,c_8$, so $c_4\neq c_8$. Thus
$\{c_4,c_8\}=\{b,d\}$.
\end{proof}

\begin{lemma}\label{LE}
If $c_1\neq c_3$ and $c_2\in\{c_1,c_3\}$, then the colour not in
$\{c_1,c_2,c_3\}$ appears on $\{23,24\}$.
\end{lemma}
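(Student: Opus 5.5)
Gadget $E$ consists of $23$ with $N[23]=\{1,2,23,24\}$ and $24$ with $N[24]=\{2,3,23,24\}$. I would argue by contradiction, using only the goodness of these two closed neighbourhoods. Let $e$ be the colour not in $\{c_1,c_2,c_3\}$; it exists because $c_2\in\{c_1,c_3\}$, so only two colours appear on the hubs $1,2,3$. Suppose $e\notin\{c(23),c(24)\}$. Then $N[23]$ and $N[24]$ are coloured only with $c_1$ and $c_3$, which are distinct. I then split into the two cases $c_2=c_1$ and $c_2=c_3$, and in each use the fact from the introduction that a two-coloured set of four vertices is good if and only if one colour appears exactly once.

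\emph{Case $c_2=c_1=a$, $c_3=b$.} In $N[23]$ the colour $a$ already appears on $1$ and $2$. So goodness requires exactly one of $23,24$ to have colour $a$, or both to have colour $b$ (then $b$ appears twice and $a$ twice, which is not good). Hence exactly one of $23,24$ has colour $a$ and the other has colour $b$, and $a$ appears three times while $b$ appears once. In $N[24]$ we then have $c_2=a$, $c_3=b$, and one each of $a,b$ on $\{23,24\}$. That is two of each colour, so $N[24]$ is not good, a contradiction.

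\emph{Case $c_2=c_3=b$, $c_1=a$.} This is symmetric, with $N[24]$ playing the role of $N[23]$. Since $b$ appears on $2,3$, the set $N[24]$ forces $\{c(23),c(24)\}=\{a,b\}$. Then $N[23]$ contains $a$ on $1$, $b$ on $2$, and one each of $a,b$, so it is not good.

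The only step needing care is the case check, and it is routine. In fact both cases reduce to one observation. Whichever hub set among $\{1,2\}$ and $\{2,3\}$ is monochromatic forces $\{23,24\}$ to be bichromatic in $\{c_1,c_3\}$. The other closed neighbourhood then has two vertices of each colour and is not good. This gives the contradiction, so $e$ appears on $\{23,24\}$.
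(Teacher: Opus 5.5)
Your proof is correct and is essentially the paper's argument: the closed neighbourhood containing the monochromatic hub pair forces $c(23)\ne c(24)$, and the other closed neighbourhood then rules out $\{c(23),c(24)\}=\{c_1,c_3\}$. The paper argues directly rather than by contradiction and handles your second case via the symmetry $1\leftrightarrow3$, $23\leftrightarrow24$; your sentence on $N[23]$ is a little garbled and should dismiss $c(23)=c(24)=a$ explicitly (then $N[23]$ is monochromatic), but this is cosmetic.
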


\begin{proof}
We have $N[23]=\{1,2,23,24\}$ and $N[24]=\{2,3,23,24\}$. By the symmetry
$1\leftrightarrow3$, $23\leftrightarrow24$ we may assume that $c_1=c_2=p$ and
$c_3=q\neq p$. If $c_{23}=c_{24}$, then $N[23]$ is coloured $p,p,c_{23},c_{23}$
and is not good. If $\{c_{23},c_{24}\}=\{p,q\}$, then $N[24]$ is coloured
$p,q,p,q$ and is not good. So $c_{23}\ne c_{24}$ and
$\{c_{23},c_{24}\}\ne\{p,q\}$. Hence the third colour appears on $\{23,24\}$.
\end{proof}

\begin{proof}[Proof of Theorem~\ref{T25}]
We distinguish cases according to the colours of the hubs.

\smallskip\noindent\textbf{Case 1: $c_1=c_3$.} Since $C$ is a copy of $W(1,3)$
attached to the edge $13$, Lemmas~\ref{L_rim} and~\ref{L_force} give a colour
$d\ne c_1$ that does not appear on $N(1)\setminus C$. So
$d\notin c(A(1))\cup c(D(1))$. Also $d\ne c_0$, since $0\in N(1)\setminus C$.

\emph{Case 1.1: $c_0\neq c_1$.} Let $a=c_0$ and $b=c_1=c_3$. So $d$ is the third
colour. By Lemma~\ref{LA}(a), $d\in c(A(0))$. By Lemma~\ref{LD}(b) for $D$,
$c_{21}=d$ and $c_{22}=a$. So $\mu_0(d)\ge2$ (on $A(0)$ and $21$), $\mu_0(a)\ge2$
(vertices $0,22$) and $\mu_0(b)\ge2$ (vertices $1,3$). Hence $N[0]$ is not good.

\emph{Case 1.2: $c_0=c_1=c_3=a$.} By Lemmas~\ref{LA}(c) and~\ref{LD}(b), both
colours different from $a$ appear on $A(0)$ and on $D(0)$. So they appear at
least twice in $N[0]$. Since $\mu_0(a)\ge3$, $N[0]$ is not good.

\smallskip\noindent\textbf{Case 2: $c_1\ne c_3$ and $c_2\in\{c_1,c_3\}$.} Let $q$
be the colour in $\{c_1,c_3\}\setminus\{c_2\}$, and let $d$ be the colour not in
$\{c_1,c_2,c_3\}$. By Lemma~\ref{LE}, $d\in c(\{23,24\})\subseteq c(N(2))$.

\emph{Case 2.1: $c_0=d$.} Then $\mu_2(d)\ge2$ (vertex $0$ and $E$) and
$\mu_2(c_2)\ge2$ (vertex $2$ and a vertex of $\{1,3\}$). So $\mu_2(q)=1$ and
$q\notin c(B(2))$. By Lemma~\ref{LD}(b) for $B$, $c_{12}=q$ and $c_{13}=d$. Then
$\mu_0(d)\ge2$ (vertices $0,13$), $\mu_0(q)\ge2$ and $\mu_0(c_2)\ge2$. So $N[0]$
is not good.

\emph{Case 2.2: $c_0=c_2=a$.} Let $b=q$. The hubs are coloured $a,a,a,b$, so
$\mu_i(a)\ge3$ for every hub $i$. Since $N[2]$ is good, $\mu_2(b)=1$ or
$\mu_2(d)=1$. In the first case $b\notin c(B(2))$. In the second case
$d\notin c(B(2))$, since $d$ appears on $E\subseteq N(2)$. So Lemma~\ref{LD}(b)
for $B$ shows that $b$ and $d$ both appear on $B(0)$. Thus $\mu_0(b)\ge2$, so
$\mu_0(d)=1$ and $d\notin c(A(0))\cup c(D(0))$. Since $c_1\in\{a,b\}$, we have
$d\notin\{c_0,c_1\}$. So Lemma~\ref{LA}(b) shows that $b$ and $d$ appear on
$A(1)$. Lemma~\ref{LD}(a) shows that $d$ appears on $D(1)$. Hence
$\mu_1(b)\ge2$, $\mu_1(d)\ge2$, $\mu_1(a)\ge3$, and $N[1]$ is not good.

\emph{Case 2.3: $c_0=q$.} Let $a=q$ and $b=c_2$. The hubs are coloured
$a,a,b,b$. So $\mu_i(d)=1$ for every hub $i$, and $d\notin\{c_0,c_1\}$. If $d$
appears on both $A(1)$ and $D(1)$, then $\mu_1(d)\ge2$. If $d$ is missing on both,
then by Lemmas~\ref{LA}(a) and~\ref{LD}(a) it appears on $A(0)$ and on $D(0)$, and
$\mu_0(d)\ge2$. So there is $X\in\{A,D\}$ with $d\notin c(X(1))$, while $d$
appears on $Y(1)$ for the other gadget $Y\in\{A,D\}$. By Lemma~\ref{LA}(a) or
Lemma~\ref{LD}(a), $d$ appears on $X(0)$. This is the only vertex of colour $d$
in $N[0]$, so $d\notin c(B(0))$. The only vertex of colour $d$ in $N[1]$ lies in
$Y(1)$. So $c_{23}\ne d$, and Lemma~\ref{LE} gives $c_{24}=d$. This is the only
vertex of colour $d$ in $N[2]$, so $d\notin c(B(2))$. Hence $d$ is missing on
$B\cup\{0,2\}$, contrary to Lemma~\ref{LD}(a) for $B$.

\smallskip\noindent\textbf{Case 3: $c_1,c_2,c_3$ are pairwise distinct.} Then
$\{c_1,c_2,c_3\}=\{1,2,3\}$, so $c_0\in\{c_1,c_2,c_3\}$. In each subcase the hubs
are coloured $a,a,b,d$ with $\{a,b,d\}=\{1,2,3\}$. So $\mu_i(a)\ge2$ for every
hub $i$. Hence $N[i]$ is good only if $b$ or $d$ is missing on
$N(i)\setminus\{0,1,2,3\}$.

\emph{Case 3.1: $c_0=c_1=a$.} Suppose that $b$ or $d$ is missing on $A(0)$. Then
Lemma~\ref{LA}(b), applied to this colour, shows that both $b$ and $d$ appear on
$A(1)$. So $N[1]$ is not good. Otherwise both $b$ and $d$ appear on $A(0)$, and
$N[0]$ is not good.

\emph{Case 3.2: $c_0=c_2=a$.} By Lemma~\ref{LD}(b) for $B$, both $b$ and $d$
appear on $B(0)$ or both appear on $B(2)$. So $N[0]$ or $N[2]$ is not good.

\emph{Case 3.3: $c_0=c_3=a$, $c_1=b$, $c_2=d$.} If $d\notin c(A(0))$, then by
Lemma~\ref{LA}(b) both $b$ and $d$ appear on $A(1)$, and $N[1]$ is not good. So
$d$ appears on $A(0)$, and therefore $b\notin c(B(0))$. If $b\notin c(B(2))$,
then Lemma~\ref{LD}(b) for $B$ gives $c_{12}=b$. This is a contradiction, since
$12\in B(0)$. Hence $b\in c(B(2))$, so $\mu_2(b)\ge2$. Since $\mu_2(a)\ge2$ and
$N[2]$ is good, we get $\mu_2(d)=1$, that is, $d\notin c(B(2))$. Since
$9,13\in B(0)\cap B(2)$, we get $c_9=c_{13}=a$. Then $N[12]$ gives $c_{12}\ne a$,
so $c_{12}=d$. Now $N[9]$ is coloured $a,d,a,d,a$ and is not good.
\end{proof}

The proof of Theorem~\ref{T25} never uses that $N[3]$ is good. This gives the
following.

\begin{proposition}\label{P_induced}
Every planar graph $H$ is an induced subgraph of a planar graph $G$ with
$\cf(G)=4$. If $H$ is connected, then $G$ can be chosen connected.
\end{proposition}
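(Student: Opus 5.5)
The plan is to identify the hub $3$ of $G_{25}$ with a vertex of $H$. Since the proof of Theorem~\ref{T25} never uses that $N[3]$ is good, we may give $3$ extra neighbours freely. Concretely, pick a vertex $h$ of $H$ (if $H$ is empty, take $G=G_{25}$). Let $G$ be obtained from the disjoint union of $G_{25}$ and $H$ by identifying $h$ with the hub $3$. Then $H$ is an induced subgraph of $G$: no new edges are created between vertices of $H$, because the only vertex shared with $G_{25}$ is $h$. If $H$ is connected, $G$ is connected, since $G_{25}$ is connected.

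Planarity comes next. $G$ is a $1$-sum (block gluing at a cut vertex) of two planar graphs. We draw $H$ with $h$ on its outer face, which is possible for any vertex by re-embedding. We then place this drawing inside a face of the drawing of $G_{25}$ incident to the hub $3$, touching it only at $3$. Hence $G$ is planar, and $\cf(G)\le4$ by the Four Colour Theorem.

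For the lower bound, suppose $c$ is a $3$-CF-colouring of $G$ and restrict it to $V(G_{25})$. I check that every closed neighbourhood used in the proof of Theorem~\ref{T25} is unchanged in $G$. The neighbourhoods of gadget vertices consist of gadget vertices and hubs, and none of them gains new neighbours. The hubs $0,1,2$ also keep their closed neighbourhoods, because the new vertices are adjacent only to $3$ or to each other. So every set $N[v]$ with $v\ne3$, $v\in V(G_{25})$, is the same in $G$ as in $G_{25}$, and is good under $c$. The only set that changes is $N[3]$, and it is never invoked. Lemma~\ref{LE} uses $N[24]$, which contains $3$ as an element but is not $N[3]$. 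Likewise Lemmas~\ref{L_rim} and~\ref{L_force} are used for $C=W(1,3)$ only through $N[1]$ and the rim neighbourhoods. Therefore the case analysis of Theorem~\ref{T25} runs verbatim and yields a contradiction, so $\cf(G)=4$.

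The main obstacle is verifying the claim that $N[3]$ is genuinely unused. This requires auditing each case of the proof, in particular the uses of the forcing lemma with $y=3$, to confirm that only the goodness of $N[x]$ with $x=1$ and of the inner vertices is needed. I would also confirm that $3$ lies on a face boundary, which holds for every vertex of a plane graph.
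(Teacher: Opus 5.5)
Your proposal is correct and matches the paper's proof: identify a vertex of $H$ with the hub $3$ of $G_{25}$, observe that $N_G[v]=N_{G_{25}}[v]$ for every vertex $v\ne3$ of $G_{25}$, and rerun the proof of Theorem~\ref{T25}, which never uses that $N[3]$ is good. Your extra checks fill in details the paper leaves implicit: that $H$ is induced, that the $1$-sum is planar, and that the forcing lemma for $C=W(1,3)$ needs only $N[1]$ and the rim neighbourhoods.
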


\begin{proof}
Identify a vertex of $H$ with the hub $3$ of $G_{25}$. The resulting graph $G$ is
planar and contains $H$ as an induced subgraph. It is connected if $H$ is
connected. Moreover, $N_G[v]=N_{G_{25}}[v]$ for every vertex $v\ne3$ of $G_{25}$.
So the proof of Theorem~\ref{T25} shows that $\cf(G)=4$.
\end{proof}

\subsection{Critical graphs on 25 vertices}\label{SecCrit25}

By Theorem~\ref{T25}, $G_{25}$ is not critical, as deleting any of the edges
$0\,18$, $0\,20$, $1\,8$ keeps $\cf=4$.

\begin{proposition}\label{P_G51}
$\cf(G''_{25})=4$.
\end{proposition}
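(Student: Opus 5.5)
The plan is to reuse the proof of Theorem~\ref{T25} verbatim and isolate exactly what it needs from the gadget $A$. The graph $G''_{25}$ coincides with $G'_{25}$ outside $A$, so $D$ and $B$ are still $D$-gadgets and $C$ is still a copy of $W(1,3)$. Hence Lemmas~\ref{L_rim}, \ref{L_force}, \ref{LD} and~\ref{LE} apply unchanged.

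\textbf{What the proof uses about $A$.} Inspecting the proof of Theorem~\ref{T25}, the gadget $A$ enters in only three ways.
\begin{itemize}
\item Through the sets $A(0)$ and $A(1)$, together with $A=A(0)\cup A(1)$ and $A\cap N(2)=A\cap N(3)=\emptyset$.
\item Through the three statements (a), (b), (c) of Lemma~\ref{LA}.
\item Through the fact that all closed neighbourhoods of vertices of $A$ consist of vertices of $A$ and the hubs $0,1$.
\end{itemize}
In the new gadget we have $A(0)=\{4,7,8\}$ and $A(1)=\{4,5,6\}$, so $A=A(0)\cup A(1)$ still holds, and the other structural facts are clear from the last row of Table~\ref{TabN}. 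Therefore it suffices to prove an analogue of Lemma~\ref{LA} for the new gadget. The case analysis of Theorem~\ref{T25} then goes through word for word and gives $\cf(G''_{25})\ge4$. Planarity, and hence $\cf\le4$, follows from the described straight-line drawing.

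\textbf{Proving the analogue of Lemma~\ref{LA}.} The relevant closed neighbourhoods are
\[
N[4]=\{0,1,4,7,8\},\ N[5]=\{1,5,8\},\ N[6]=\{1,6,8\},\ N[7]=\{0,4,7\},\ N[8]=\{0,4,5,6,8\}.
\]
\begin{itemize}
\item For (a), suppose only two colours $a,b$ occur on $A\cup\{0,1\}$. Every $3$-set among these neighbourhoods is then non-monochromatic. So $c_5\ne c_8$ or $c_5\ne c_1$, and similarly for $6$ and for $7$. The $5$-sets $N[4]$ and $N[8]$ must have a colour appearing exactly once. I would split into the cases $c_0=c_1$ and $c_0\ne c_1$, and propagate these constraints, exactly as in Lemma~\ref{LA}(a), until $N[4]$ or $N[8]$ fails.
\item For (b), suppose $d\notin\{c_0,c_1\}$ is missing on $A(0)=\{4,7,8\}$. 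By (a), $d$ appears on $5$ or $6$. It remains to show that the colour $b\ne c_0,d$ also appears on $\{4,5,6\}$. Assuming the contrary, the vertices $4,7,8$ are coloured by $a=c_0$ and $b$, and $4$ is coloured $a$. Then $N[7]$ forces $c_7=b$, and $N[4]$ and $N[8]$ should give a contradiction.
\item For (c), suppose $c_0=c_1=a$ and $d$ is missing on $\{4,5,6\}$. Then $d$ appears on $7$ or $8$. I would show that $b$ also appears on $A(0)$ by ruling out $c_4,c_7,c_8\in\{a,d\}$, using $N[7]$, $N[5]$, $N[6]$ and $N[4]$.
\end{itemize}

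\textbf{Expected difficulty.} Each of these is a short finite check. The main obstacle is (b) and (c), where the two $5$-vertex neighbourhoods $N[4]$ and $N[8]$ interact. If one of the statements fails in the exact form of Lemma~\ref{LA}, I would weaken it to precisely the instance used in each case of Theorem~\ref{T25}: Cases 1.1, 1.2, 2.2, 3.1 and 3.3. I would then recheck those cases individually with the explicit neighbourhoods above.
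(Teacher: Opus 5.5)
Your proposal follows the paper's proof exactly: it reduces to re-proving Lemma~\ref{LA} for the new gadget with $A(0)=\{4,7,8\}$ and $A(1)=\{4,5,6\}$, and the case checks you outline are the ones the paper carries out. One small correction to the sketch: in (b), $N[4]$ and $N[8]$ alone only force $c_1=c_8=a$ and exactly one of $c_5,c_6$ to equal $a$, so the contradiction comes from $N[5]$ or $N[6]$ becoming monochromatic; in (c), the final contradiction is $N[8]$ being coloured $a,a,b,b,a$, so $N[8]$ must be added to your list.
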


\begin{proof}
Since $G''_{25}$ is planar, $\cf(G''_{25})\le4$. The other gadgets of $G''_{25}$
are as in $G'_{25}$. The proof of Theorem~\ref{T25} uses the gadget $A$ only
through Lemma~\ref{LA} and the equality $A=A(0)\cup A(1)$. In the new gadget,
$A(0)=\{4,7,8\}$ and $A(1)=\{4,5,6\}$. So it suffices to prove Lemma~\ref{LA} for
this gadget. Let $c$ be a $3$-CF-colouring of $G''_{25}$.

(a) Suppose that only two colours appear on $A\cup\{0,1\}$. First let
$c_0=c_1=a$, and let $b$ be the other colour. Then $N[4]=\{0,1,4,7,8\}$ forces
exactly one vertex of colour $b$ on $\{4,7,8\}$. Since $N[7]=\{0,4,7\}$ is not
monochromatic, this vertex is $4$ or $7$. So $c_8=a$. Then $N[5]=\{1,5,8\}$ and
$N[6]=\{1,6,8\}$ force $c_5=c_6=b$. Now $N[8]=\{0,4,5,6,8\}$ is coloured
$a,c_4,b,b,a$ and is not good. Now let $c_0=a\ne c_1=b$. Then $N[4]$ forces
$c_4=c_7=c_8$, and $N[7]$ forces this colour to be $b$. Further, $N[5]$ and
$N[6]$ force $c_5=c_6=a$. So $N[8]$ is coloured $a,b,a,a,b$ and is not good.

(b) Let $a=c_0$, and let $b$ be the colour different from $a$ and $d$. Suppose
that $b\notin c(A(1))$. By (a), $d\in\{c_5,c_6\}$. Moreover, $c_4=a$. Now $N[7]$
gives $c_7=b$. The set $N[4]$ is coloured $a,c_1,a,b,c_8$ with
$c_1,c_8\in\{a,b\}$. This gives $c_1=c_8=a$. So $N[8]$ is coloured
$a,a,c_5,c_6,a$ with $c_5,c_6\in\{a,d\}$. This forces exactly one of $c_5,c_6$ to
be $a$. Then $N[5]$ or $N[6]$ is monochromatic. Hence $b\in c(A(1))$, and
$d\in c(A(1))$ by (a).

(c) Let $a=c_0=c_1$, and let $b$ be the colour different from $a$ and $d$.
Suppose that $b\notin c(A(0))$. By (a), $d\in\{c_7,c_8\}$. Moreover, $c_4=a$ and
$c_7,c_8\in\{a,d\}$. Now $N[7]$ gives $c_7=d$. The set $N[4]$ is coloured
$a,a,a,d,c_8$, so $c_8=a$. Then $N[5]$ and $N[6]$ give $c_5=c_6=b$. So $N[8]$ is
coloured $a,a,b,b,a$ and is not good. Hence $b\in c(A(0))$, and $d\in c(A(0))$
by (a).
\end{proof}

\begin{theorem}\label{T25crit}
The graphs $G'_{25}$ and $G''_{25}$ are critical.
\end{theorem}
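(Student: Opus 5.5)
We must show that $\cf(G'_{25})=\cf(G''_{25})=4$ and that deleting any vertex or edge leaves a graph with a $3$-CF-colouring. The first part is Theorem~\ref{T25} and Proposition~\ref{P_G51}. For the second part, note that deleting a vertex $v$ removes every edge at $v$. A $3$-CF-colouring of $G-e$ for some edge $e$ incident with $v$ then gives one for $G-v$. Indeed, we restrict the colouring, and each closed neighbourhood loses only $v$. This preserves goodness unless $v$ carried the unique colour. So that case needs care, and vertex deletion cannot be derived blindly. I would treat vertex and edge deletions separately, but organise both by orbit and by gadget.

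First, I would exploit symmetry to cut down the work. In $G'_{25}$, the vertices $10,11$ in $B$ are twins, as are $19,20$ in $D$ (after deleting $0\,20$) and $14,15$ in $C$. Further twin pairs occur in the modified $A$ of $G''_{25}$ ($5,6$). This reduces the $52$ edges and $25$ vertices of $G'_{25}$ to a manageable list of orbits. Since $G''_{25}$ differs from $G'_{25}$ only in $A$, the cases with the deleted element outside $A$ can share colourings with those for $G'_{25}$ where possible.

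Second, for each deleted element I would construct an explicit $3$-CF-colouring, guided by the proof of Theorem~\ref{T25}. That proof runs through the cases on the hub colours (Cases 1.1--3.3), and each case ends in a contradiction at one specific closed neighbourhood or gadget lemma. Deleting an element used in that contradiction should open the corresponding case. For example, deleting an edge of $C$ destroys the forcing of Lemma~\ref{L_rim}, which allows $c_1=c_3$. Deleting an edge inside $D$ or $B$ breaks Lemma~\ref{LD}, and deleting an edge of $E$ breaks Lemma~\ref{LE}. For each such case, I would fix the hub colouring of the case whose contradiction disappears. Then I would colour each gadget by local patterns like (W1) and (W2): colour the gadget vertices so that each hub sees a unique colour, and check every gadget neighbourhood, using the fact that a $3$-set is good iff it is not monochromatic. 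For the hub edges of the $K_4$, I would use a proper-like colouring in which only the endpoints of the deleted edge share a colour.

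The main obstacle is the sheer number of cases and making sure that no neighbourhood fails, especially $N[8]$, $N[13]$ and $N[22]$, which are large. I would present the colourings in a table, one row per orbit of deleted elements, listing the $25$ colours. Verification is then routine but tedious, and I would state that it was checked by computer. For the vertex deletions, I would first try to reuse the colouring for an incident edge and simply check the neighbours of $v$ where $v$ was unique. Only where that fails would I give a separate row.
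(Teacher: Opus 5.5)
Your plan is essentially the paper's proof. The lower bound comes from Theorem~\ref{T25} and Proposition~\ref{P_G51}. The twin pairs $10,11$, $14,15$, $19,20$ (and $5,6$ in $G''_{25}$) give automorphisms that reduce the cases. Criticality is then certified by an explicit $3$-CF-colouring of $G-f$ for each remaining vertex or edge $f$, checked directly (Tables~\ref{TabCert1} and~\ref{TabCert2}), with separate rows for vertex deletions, as you anticipate. Your heuristics for finding the colourings are reasonable but unnecessary. The substance of the proof is the certificate list itself, which your proposal correctly names as what must be supplied but does not yet contain.
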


\begin{proof}
By Theorem~\ref{T25} and Proposition~\ref{P_G51},
$\cf(G'_{25})=\cf(G''_{25})=4$. In both graphs, the vertices $10$ and $11$ have
the same neighbourhoods. The same holds for $14$ and $15$, for $19$ and $20$, and
in $G''_{25}$ also for $5$ and $6$. So exchanging two such vertices is an
automorphism. Let $G\in\{G'_{25},G''_{25}\}$. Up to these automorphisms,
Tables~\ref{TabCert1} and~\ref{TabCert2} in Appendix~\ref{AppA} give a colouring
of $G-f$ with colours $1,2,3$ for every vertex and every edge $f$ of $G$. One
checks directly that it is a CF-colouring of $G-f$.
\end{proof}

\section{Infinite families}\label{SecFam}

Theorem~\ref{Tm} gives infinitely many counterexamples, since infinitely many
planar graphs have chromatic number $4$, for instance the odd wheels. By
Proposition~\ref{P_induced}, such graphs can contain any planar graph as an
induced subgraph. In this section we give two further families, triangulations
and critical graphs.

\subsection{Triangulations}\label{SecTri}

Let $T$ be the plane triangulation on the nine vertices $x,y,z,u_1,\dots,u_6$
with outer face $xyz$ (see Figure~\ref{FigT}). The vertex $x$ is adjacent to all
other vertices, and the neighbours of $x$ form the cycle
$y\,u_1u_2u_3u_4u_5u_6\,z\,y$. The remaining edges are $u_6y$, $u_6u_1$,
$u_6u_2$, $u_2u_4$ and $u_4u_6$. We regard $T$ as a gadget with apex $x$ and
inner vertices $u_1,\dots,u_6$. Thus
\[
N[u_1]=\{x,y,u_1,u_2,u_6\},\ N[u_2]=\{x,u_1,u_2,u_3,u_4,u_6\},\
N[u_3]=\{x,u_2,u_3,u_4\},
\]
\[
N[u_4]=\{x,u_2,u_3,u_4,u_5,u_6\},\ N[u_5]=\{x,u_4,u_5,u_6\}.
\]
The set $N[u_6]$ will not be needed.

\begin{figure}[t]
\centering
\begin{tikzpicture}[scale=0.5,yscale=-1,v/.style={circle,draw,fill=white,inner sep=1.4pt}]
\coordinate (X) at (0,-3); \coordinate (Y) at (9,6.5); \coordinate (Z) at (-9,6.5);
\foreach \i/\p in {1/3.75,2/2.25,3/0.75,4/-0.75,5/-2.25,6/-3.75}
  \coordinate (U\i) at (\p,2.6);
\draw (X)--(Y)--(Z)--(X);
\foreach \i in {1,...,6} \draw (X)--(U\i);
\draw (Y)--(U1)--(U2)--(U3)--(U4)--(U5)--(U6)--(Z);
\draw (U4) .. controls (-0.75,3.3) and (1.5,3.4) .. (U2);
\draw (U6) .. controls (-2.9,3.45) and (-1.4,3.4) .. (U4);
\draw (U6) .. controls (-2.9,4.4) and (2.25,4.4) .. (U2);
\draw (U6) .. controls (-3.17,5.34) and (3.6,5.4) .. (U1);
\draw (U6) .. controls (-3.62,6.2) and (3.6,6.4) .. (Y);
\node[v,label=above:$x$] at (X) {};
\node[v,label=right:$y$] at (Y) {};
\node[v,label=left:$z$] at (Z) {};
\foreach \i in {1,...,6} \node[v] at (U\i) {};
\foreach \i/\p in {2/2.25,3/0.75}
  \node[anchor=south west,inner sep=1pt] at (\p+0.12,2.45) {\scriptsize$u_{\i}$};
\foreach \i/\p in {4/-0.75,5/-2.25}
  \node[anchor=south east,inner sep=1pt] at (\p-0.12,2.45) {\scriptsize$u_{\i}$};
\node[anchor=east,inner sep=1pt] at (-3.95,2.6) {\scriptsize$u_6$};
\node[anchor=west,inner sep=1pt] at (3.95,2.6) {\scriptsize$u_1$};
\end{tikzpicture}
\caption{The gadget $T$, a plane triangulation with outer face $xyz$.}
\label{FigT}
\end{figure}

\begin{lemma}\label{LT}
$T$ is a forcing gadget with apex $x$.
\end{lemma}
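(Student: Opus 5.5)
The plan is to argue exactly as in the proof of Lemma~\ref{L_rim}, by a counting argument on the two colours used on the inner vertices. Let $c$ be a colouring of $T$ with colours $1,2,3$ such that $c(x)=c(y)=a$ and $N[u_i]$ is good for the inner vertices. Since $x$ is adjacent to every $u_i$, we have $N_T(x)\cap I=\{u_1,\dots,u_6\}$. So it suffices to show that both colours different from $a$ appear on $\{u_1,\dots,u_6\}$. Suppose instead that some colour $d\ne a$ is missing there. Then every $u_i$ has colour $a$ or the remaining colour $b$, and so do $x$ and $y$. Hence every closed neighbourhood $N[u_i]$ with $i\le5$ is two-coloured. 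By the fact recalled in Section~\ref{SecIntro}, such a set is good if and only if $a$ appears on exactly $1$ or exactly $|N[u_i]|-1$ of its vertices.

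Next I introduce indicators $\alpha_i\in\{0,1\}$, with $\alpha_i=1$ if and only if $c(u_i)=a$. Using the listed closed neighbourhoods, and remembering that $x$ (and, for $u_1$, also $y$) contributes fixed occurrences of $a$, goodness translates into the following conditions:
\begin{itemize}
\item $N[u_1]$ has five vertices, $a$ on $x,y$. Good iff $2+\alpha_1+\alpha_2+\alpha_6\in\{1,4\}$, that is, $\alpha_1+\alpha_2+\alpha_6=2$.
\item $N[u_2]$ has six vertices. Good iff $\alpha_1+\alpha_2+\alpha_3+\alpha_4+\alpha_6\in\{0,4\}$.
\item $N[u_3]$ has four vertices. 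Good iff $\alpha_2+\alpha_3+\alpha_4\in\{0,2\}$.
\item $N[u_4]$ has six vertices. Good iff $\alpha_2+\alpha_3+\alpha_4+\alpha_5+\alpha_6\in\{0,4\}$.
\item $N[u_5]$ has four vertices. Good iff $\alpha_4+\alpha_5+\alpha_6\in\{0,2\}$.
\end{itemize}

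The core step is to derive a contradiction from this system. From the first two conditions, the sum for $N[u_2]$ is at least $2$, hence equals $4$. This forces $\alpha_3=\alpha_4=1$. Then the condition for $N[u_3]$ gives $\alpha_2=0$, and so the condition for $N[u_1]$ gives $\alpha_1=\alpha_6=1$. The condition for $N[u_5]$ now reads $2+\alpha_5\in\{0,2\}$, so $\alpha_5=0$. Finally, the sum for $N[u_4]$ is $0+1+1+0+1=3\notin\{0,4\}$, a contradiction. Thus both colours other than $a$ appear on $N(x)\cap I$, and $T$ is a forcing gadget with apex $x$. As the paper remarks, $N[u_6]$ is never used.

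The only step needing care is the translation of goodness into the parity and count conditions. In particular, one must correctly count the fixed $a$'s on $x$ and $y$ and the sizes of the neighbourhoods. After that, the deduction is a short linear chain with no real obstacle.
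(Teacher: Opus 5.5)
Your proof is correct and follows the paper's argument essentially step for step. You use the same translation of goodness of $N[u_1],\dots,N[u_5]$ into counts of colour $a$, and the same chain of deductions ($u_3,u_4$ get $a$, then $u_2$ gets $b$, then $u_1,u_6$ get $a$, then $u_5$ gets $b$), ending with $N[u_4]$ not good; the only differences are cosmetic, namely your per-vertex indicators $\alpha_i$ in place of the paper's set function $\alpha(S)$.
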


\begin{proof}
Let $c$ be a colouring of $T$ with colours $1,2,3$ such that $c(x)=c(y)=a$ and
$N[u_1],\dots,N[u_5]$ are good. We show that both colours different from $a$
appear on $U=\{u_1,\dots,u_6\}\subseteq N(x)$. Suppose that some colour $d\ne a$
is missing on $U$. Then $U$ is coloured only by $a$ and the remaining colour $b$.
For $S\subseteq U$ let $\alpha(S)$ be the number of vertices of $S$ of colour $a$.
Since $a$ appears on $x$ and $y$, the colour $b$ is unique in $N[u_1]$. So
$\alpha(\{u_1,u_2,u_6\})=2$. The set $N[u_2]$ has six vertices coloured by $a$
and $b$. So it is good if and only if
$1+\alpha(\{u_1,u_2,u_3,u_4,u_6\})\in\{1,5\}$. This number is at least $3$, so it
equals $5$, and $c(u_3)=c(u_4)=a$. Similarly, $N[u_3]$ is good only if
$\alpha(\{u_2,u_3,u_4\})\in\{0,2\}$. So $c(u_2)=b$, and then
$c(u_1)=c(u_6)=a$. Also $N[u_5]$ is good only if
$\alpha(\{u_4,u_5,u_6\})\in\{0,2\}$, so $c(u_5)=b$. Now $N[u_4]$ is coloured
$a,b,a,a,b,a$, which is not good. This is a contradiction.
\end{proof}

Let $H$ be a $2$-connected plane graph. Insert a new vertex $w_F$ into every face
$F$ of $H$, and join it to all vertices of $F$. Since $H$ is $2$-connected, every
face of $H$ is bounded by a cycle. Also every edge $xy$ of $H$ lies on two
distinct faces $F$ and $F'$. So the result is a simple plane triangulation. In
it, $xy$ lies in exactly two faces $w_Fxy$ and $w_{F'}xy$, and these $2|E(H)|$
faces are pairwise distinct. For every edge of $H$, choose one end $x$. Insert a
copy of $T$ into each of the two faces $w_Fxy$ and $w_{F'}xy$. Its outer face
$xyz$ is identified with $x,y,w_F$ and with $x,y,w_{F'}$, respectively. Let $G(H)$
be the resulting plane triangulation. It has
$|V(H)|+|F(H)|+12|E(H)|=13|E(H)|+2$ vertices.

\begin{theorem}\label{Tm_tri}
For every $2$-connected plane graph $H$ with $\chi(H)=4$, the plane triangulation
$G(H)$ satisfies $\cf(G(H))=4$.
\end{theorem}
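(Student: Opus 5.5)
The plan is to show two things: $\cf(G(H))\le4$, and that $G(H)$ has no $3$-CF-colouring. The upper bound is immediate, since $G(H)$ is a plane triangulation and hence planar, so the Four Colour Theorem applies. For the lower bound, assume $c$ is a $3$-CF-colouring of $G(H)$. The aim is to prove that $c$ restricted to $V(H)$ is a proper $3$-colouring of $H$, which contradicts $\chi(H)=4$.

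Fix an edge $xy$ of $H$, with chosen end $x$. The two copies of $T$ inserted into the faces $w_Fxy$ and $w_{F'}xy$ are the key objects, and I will apply the second statement of Lemma~\ref{L_force} to them. This needs three checks.

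\textbf{Forcing.} Each copy is a forcing gadget with apex $x$, by Lemma~\ref{LT}. Its inner vertices are $u_1,\dots,u_6$, and the two copies have disjoint inner sets because the rims consist of new vertices.

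\textbf{Attachment.} The definition requires $N_{G(H)}[u]=N_T[u]$ for every inner vertex $u$. All $u_i$ lie strictly inside the face $xyz$ of the copy, so their neighbourhoods in $G(H)$ are exactly their neighbourhoods in $T$. Here $z$ is identified with $w_F$ or $w_{F'}$, and these are distinct vertices of $G(H)$.

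\textbf{Edge and simplicity.} The edge $xy$ must be present, which it is, since $xy$ bounds the triangle $xyz$. Identifying the outer face of each copy with a facial triangle must create no new adjacencies among old vertices. This holds because the copy's edges among $x,y,z$ are exactly the triangle edges, which already exist.

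Lemma~\ref{L_force} then gives $c(x)\ne c(y)$. Since this holds for every edge $xy$ of $H$, the restriction of $c$ to $V(H)$ is a proper colouring with at most three colours. That contradicts $\chi(H)=4$, so $\cf(G(H))=4$.

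The main obstacle is the attachment condition in the right generality. Lemma~\ref{LT} only requires $N[u_1],\dots,N[u_5]$ to be good, and $N[u_6]$ is never used, so no issue arises from $u_6$. For $u_1,\dots,u_5$ one must confirm that their neighbourhoods are exactly as listed. In particular, $u_1$ is adjacent to $y$ but not to $z$ or to any vertex of the other copy. The role of $y$ as the non-apex outer vertex must also match the orientation used in Lemma~\ref{LT}, where $c(y)=c(x)$ is assumed. Everything else is a direct combination of Lemmas~\ref{LT} and~\ref{L_force}. I would also briefly verify the vertex count and that $G(H)$ is simple, but these are not needed for the colouring bound.
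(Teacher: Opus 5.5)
Your proposal is correct and matches the paper's proof. The upper bound comes from the Four Colour Theorem, and at each edge $xy$ Lemmas~\ref{LT} and~\ref{L_force} are applied to the two copies of $T$ with common apex $x$ and disjoint inner sets, so every $3$-CF-colouring would restrict to a proper $3$-colouring of $H$, which does not exist. Your extra checks of the attachment condition (the $u_i$ lie inside distinct faces, so $N_{G(H)}[u_i]=N_T[u_i]$) spell out what the paper states in one line.
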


\begin{proof}
We have $\cf(G(H))\le4$ by the Four Colour Theorem. The two copies of $T$
inserted at an edge $xy$ of $H$ are attached to $xy$, and they have disjoint sets
of inner vertices. So by Lemmas~\ref{LT} and~\ref{L_force} every
$3$-CF-colouring of $G(H)$ is a proper $3$-colouring of $H$. But such a colouring
does not exist.
\end{proof}

Theorem~\ref{Tm_tri} applies, for instance, to every odd wheel and to every plane
triangulation with a vertex of odd degree. Indeed, a plane triangulation is
$3$-colourable if and only if all its vertices have even degree (Heawood, see
\cite{TsaiWest}). For $H=K_4$ the triangulation $G(H)$ has $80$ vertices.

\subsection{Critical graphs}\label{SecCritFam}

By Proposition~\ref{P_noncrit}, the graphs of Theorem~\ref{Tm} are not critical.
Its proof shows the following. A single forcing gadget on an edge $xy$ with
$c(x)=c(y)$ already gives a contradiction if both colours different from $c(x)$
appear on $N(x)$ outside the gadget. The graphs $H_k$ below have two gadgets on
each edge of an odd cycle. Each spoke has only one gadget, and there this
situation occurs.

For $k\ge1$ let $n=2k+1$, and let $\W_n$ be the odd wheel with centre $h$ and
cycle $C=v_1v_2\cdots v_n$ (indices modulo $n$). Let $H_k$ be obtained from $\W_n$
by attaching two copies of $W(v_i,v_{i+1})$ to every cycle edge $v_iv_{i+1}$ and
one copy of $W(v_i,h)$ to every spoke $hv_i$. Thus all apices lie on $C$. The two
copies of $W(v_i,v_{i+1})$ are $R_i^1$ and $R_i^2$. The rim of $R_i^t$ is
$r^{i,t}_1r^{i,t}_2r^{i,t}_3r^{i,t}_4$, where $v_i$ is adjacent to all four rim
vertices and $v_{i+1}$ only to $r^{i,t}_4$. The copy of $W(v_i,h)$ is $S_i$. Its
rim is $s^i_1s^i_2s^i_3s^i_4$, where $v_i$ is adjacent to all four rim vertices
and $h$ only to $s^i_4$. Each gadget can be drawn close to its edge, so $H_k$ is
planar. It has $26k+14$ vertices and $58k+29$ edges. In $H_k$ we have
\begin{equation}\label{eq_loc}
N[v_i]=\{h,v_{i-1},v_i,v_{i+1},r^{i-1,1}_4,r^{i-1,2}_4\}\cup
\{s^i_j,r^{i,1}_j,r^{i,2}_j:1\le j\le4\}.
\end{equation}

\begin{theorem}\label{Tm_Hk}
For every $k\ge1$, $\cf(H_k)=4$.
\end{theorem}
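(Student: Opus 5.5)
Planarity of $H_k$ was noted above, so $\cf(H_k)\le4$ by the Four Colour Theorem. For the lower bound, suppose $c$ is a $3$-CF-colouring of $H_k$ and derive a contradiction.

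\emph{Cycle edges.} Each cycle edge $v_iv_{i+1}$ carries the two copies $R_i^1,R_i^2$ of $W(v_i,v_{i+1})$, both with apex $v_i$ and with disjoint rims consisting of new vertices. Both copies are attached to $v_iv_{i+1}$. So Lemmas~\ref{L_rim} and~\ref{L_force} give $c(v_i)\ne c(v_{i+1})$ for all $i$. Thus $c$ restricted to $C$ is a proper colouring of the odd cycle $C$, and hence uses all three colours on $C$.

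\emph{Spokes.} Since the wheel $\W_n$ has $\chi(\W_n)=4$, the restriction of $c$ to $\W_n$ is not proper. The cycle edges are properly coloured, so some spoke has $c(h)=c(v_i)=a$. Let $b$ and $d$ be the other two colours. The gadget $S_i$ is a copy of $W(v_i,h)$ attached to $hv_i$. So the first statement of Lemma~\ref{L_force} (with Lemma~\ref{L_rim}) gives a colour in $\{b,d\}$ that does not appear on $N(v_i)\setminus\{s^i_1,\dots,s^i_4\}$. By \eqref{eq_loc}, this set contains $v_{i-1}$, $v_{i+1}$ and the rims of $R_i^1$ and $R_i^2$.

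\emph{The contradiction.} The rim of $R_i^1$ lies in this set, and $R_i^1$ is a forcing gadget attached to $v_iv_{i+1}$. We cannot apply the forcing property directly, since $c(v_{i+1})\ne c(v_i)$. The cleaner route is to use $v_{i\pm1}$. Both neighbours $v_{i-1},v_{i+1}$ differ from $a$, since the cycle edges are proper. If they have different colours, then $b$ and $d$ both appear on $N(v_i)\setminus\{s^i_1,\dots,s^i_4\}$, which is a contradiction. So $c(v_{i-1})=c(v_{i+1})=b$, and we must show that $d$ appears on the rims of $R_i^1$ or $R_i^2$.

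Suppose instead that these rims use only $a$ and $b$. Each rim vertex $r$ of $R_i^t$ has $N[r]$ consisting of $v_i$ (colour $a$), rim vertices, and, for $r_4$, also $v_{i+1}$ (colour $b$). This is exactly the parity argument of Lemma~\ref{L_rim}, with the roles adjusted. For $j\le3$ the condition is $\sigma_j$ even. For $r_4$, the set $N[r_4]$ has five vertices, two of which (one $a$, one $b$) lie off the rim, so goodness requires $\sigma_4\in\{0,3\}$. Running the parity argument: if all $\sigma_j$ are even, all rim vertices have one colour, and then $\sigma_4\in\{0,3\}$ is actually allowed. So this step needs care.

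I expect this step to be the main obstacle. A monochromatic rim of colour $b$ seems possible, and if so the contradiction must instead come from $N[v_i]$ itself. In that case the plan is to count colours in $N[v_i]$. We have $\mu(a)\ge2$ (from $v_i$ and $h$) and $\mu(b)\ge2$ (from $v_{i\pm1}$). So $d$ must be unique in $N[v_i]$, and it must then lie on the rim of $S_i$, the $R_i^t$, or the $r_4^{i-1,t}$.

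Next use the $S_i$ constraint, which says $d$ lies on the rim of $S_i$ and nowhere else in $N(v_i)$. Then examine $N[s^i_j]$: the rim of $S_i$ is coloured $d$ once and otherwise by $\{a,b\}$ or with repeats. Apply the case analysis (W2)-style to show that the only colouring of the $S_i$ rim with exactly one $d$ and $N[s_j^i]$ good, given $c(v_i)=c(h)=a$, fails. If that also fails, fall back on exploiting that $r_4^{i-1,t}$ sees $v_{i-1}$ and $v_i$ and constrains the colours, and choose $i$ cleverly among all spokes with $c(h)=c(v_i)$. There are at least two such spokes, since the cycle uses all three colours.
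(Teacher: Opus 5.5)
\textbf{Verdict: there is a genuine gap.} Your argument is correct up to the point where you fix an \emph{arbitrary} spoke $hv_i$ with $c(h)=c(v_i)=a$ and dispose of the case $c(v_{i-1})\ne c(v_{i+1})$. The remaining case $c(v_{i-1})=c(v_{i+1})=b$ cannot be refuted near $v_i$, and both routes you propose fail:
\begin{itemize}
\item The rims of $R_i^1,R_i^2$ need not contain $d$. Colouring them entirely by $b$ is allowed by (W1), as you noticed yourself.
\item The rim of $S_i$ can contain exactly one vertex of colour $d$. Since $c(v_i)=c(h)=a$, the rim coloured $d,b,b,b$ is fine by (W2).
\end{itemize}
Now also colour the rims of $R_{i-1}^1,R_{i-1}^2$ entirely by $a$. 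This is again fine by (W1), since $c(v_{i-1})=b\ne a=c(v_i)$. Then every rim vertex near $v_i$ has a good closed neighbourhood. Moreover, $N[v_i]$ contains $d$ exactly once, so it is good too. So no contradiction can come from the neighbourhood of an arbitrary monochromatic spoke. Your closing suggestion to ``choose $i$ cleverly'' is exactly the missing step, and no argument is given for it. Also, your claim that there are at least two spokes with $c(h)=c(v_i)$ does not follow: for $n=3$ the cycle may be coloured $a,b,d$.

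\textbf{The missing idea} is to use that $N[h]$ is good, which your argument never does.
\begin{itemize}
\item Since $C$ is properly coloured and receives all three colours, $a=c(h)$ appears at least twice in $N[h]$.
\item So some colour $d\ne a$ is unique in $N[h]$. As $d$ occurs on $C$, it occurs there exactly once, on some $v_m$.
\item The path $C-v_m$ has $2k$ vertices and is properly coloured by $a$ and $b$. So its ends, the two cycle-neighbours of $v_m$, have different colours.
\item Let $v_i$ be the end of colour $a$. Then $c(v_i)=c(h)$, and the cycle-neighbours of $v_i$ have colours $d$ and $b$.
\end{itemize}
Your first case, Lemma~\ref{L_force} applied to $S_i$, now gives the contradiction. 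This is the paper's proof.

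Alternatively, count parities of the maximal $\{b,d\}$-coloured paths between consecutive $a$-vertices of $C$. This shows that in every proper $3$-colouring of an odd cycle, each colour class contains a vertex whose two neighbours have distinct colours. That also produces the required $i$, without using $N[h]$.
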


\begin{proof}
Let $c$ be a $3$-CF-colouring of $H_k$. By Lemmas~\ref{L_rim}
and~\ref{L_force}, the two gadgets on every cycle edge force
$c(v_i)\ne c(v_{i+1})$. So the odd cycle $C$ receives all three colours. In
particular, $a=c(h)$ appears at least twice in $N[h]$. Since $N[h]$ is good, some
colour $d\ne a$ is unique in $N[h]$. Since $d$ appears on $C$, it appears there
exactly once, say on $v_m$. Let $b$ be the third colour. The path $C-v_m$ has
$2k$ vertices and is properly coloured by $a$ and $b$. So its ends, the two
neighbours of $v_m$ on $C$, have different colours. Let $v_i$ be the one of
colour $a$. The gadget $S_i$ is attached to the spoke $hv_i$. By
Lemma~\ref{L_force} applied to $S_i$, one of $b,d$ does not appear on $N(v_i)$
outside the rim of $S_i$. But $v_m$ has colour $d$, the other neighbour of $v_i$
on $C$ has colour $b$, and both lie there. This is a contradiction.
\end{proof}

\begin{theorem}\label{Tm_crit}
For every $k\ge1$, the graph $H_k$ is critical.
\end{theorem}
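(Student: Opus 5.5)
By Theorem~\ref{Tm_Hk}, $\cf(H_k)=4$. It remains to show $\cf(H_k-f)\le3$ for every vertex or edge $f$. I plan to do this by writing down explicit $3$-CF-colourings of $H_k-f$, one family for each type of $f$. Since deleting a vertex $u$ removes in particular an edge at $u$, it is not automatic that vertex deletions follow from edge deletions. Still, most vertex cases reduce to an edge case: the colouring built for $H_k-e$ usually stays valid after deleting a rim vertex near $e$, provided the neighbouring rim vertices are checked. The rotational symmetry $v_i\mapsto v_{i+1}$ does not preserve the orientation of the gadgets' apices. Even so, it lets us fix the index of the deleted element, and we can assume without loss of generality that $f$ lies in the gadgets at $v_1$ or on edges near $v_1$.

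\textbf{Base colouring.} Colour $h$ with $a$. Colour $C$ properly with $v_1=d$ and the rest alternating $b,a,b,\dots$; this is possible because $C-v_1$ is an even path. The only obstruction in Theorem~\ref{Tm_Hk} is the spoke gadget $S_i$ at a vertex $v_i$ of colour $a=c(h)$ adjacent to $v_m$. Here the neighbours of $v_1$ are $v_2$ (colour $b$) and $v_n$ (colour $a$). The only problematic spoke is therefore $S_n$, whose apex $v_n$ has the same colour as $h$; every other vertex $v_i$ of colour $a$ also conflicts with $h$ through $S_i$. To avoid this, I will instead choose the colouring so that $c(v_i)=c(h)$ happens only where the deletion helps. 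Concretely, we aim for colourings in which:
\begin{itemize}
\item every gadget whose two ends get different colours is coloured standardly, as in (W1);
\item each pair $v_i,h$ of equal colour is handled by a (W2)-type colouring of $S_i$, with $N[v_i]$ still good.
\end{itemize}

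\textbf{Case analysis by the type of $f$.}
\begin{itemize}
\item \emph{$f$ is the spoke $hv_i$ or lies in $S_i$ (rim vertex, rim edge, apex edge, or the edge $hs^i_4$).} Choose $c(h)=c(v_i)=a$ and properly colour the rest so that $v_i$ is the only vertex of $C$ with colour $a$. This uses $b,d$ alternating on $C-v_i$, an even path. Then $N[h]$ has $a$ on $h,v_i$ only, and possibly on $s^i_4$, so $a$ is not unique in $N[h]$; we need $b$ or $d$ to be unique there instead. This fails, so the colouring must be refined. Alternatively, keep $C$ properly coloured with $h$ of a colour used exactly once on $C$, and let $f$ break exactly the one gadget $S_i$ that forces a contradiction. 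Lemma~\ref{L_force} shows that $S_i$ is the culprit, and with $f$ deleted the rim of $S_i$ can be recoloured so that $N[v_i]$ and the rim neighbourhoods are good. I would find these rim colourings by a small exhaustive check on $W$ minus an element.
\item \emph{$f$ lies in $R_i^t$ or is a cycle edge $v_iv_{i+1}$.} Let $c(v_i)=c(v_{i+1})$, so that $C$ is coloured with two colours and $h$ gets the third colour, which is then unique in $N[h]$. The surviving copy $R_i^{3-t}$ needs $N[v_i]$ good despite $c(v_i)=c(v_{i+1})$. The proof of Proposition~\ref{P_noncrit} shows this works exactly when some colour other than $c(v_i)$ is absent from $N(v_i)$ outside that copy. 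We arrange this by a (W2) colouring of the surviving copy and suitable colourings of the damaged one and of $S_i$.
\item \emph{$f=h$ or $f=v_i$.} Deleting $h$ leaves a graph that Theorem~\ref{Tm} colours with three colours, since the odd cycle is $3$-colourable and the spoke gadgets lose their $y$. Deleting $v_i$ removes all apex roles at $v_i$; colour the remaining path properly and colour the orphaned rims by (W1).
\end{itemize}

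\textbf{Main obstacle.} The hardest step is the local verification of $N[v_i]$ and $N[h]$ in the spoke and cycle-edge cases. Equation~\eqref{eq_loc} shows that $N[v_i]$ is large, so a unique colour must be planted carefully. I would first try to make $c(v_i)$ itself unique in $N[v_i]$ using (W1). When $c(v_i)$ must repeat, I would instead arrange that exactly one rim vertex in $N[v_i]$ carries some colour, using (W2) with $j\ne2$. As in Theorem~\ref{T25crit}, the final certificates can be presented as small tables listing $O(1)$ colouring patterns per case, extended periodically along $C$.
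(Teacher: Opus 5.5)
Your proposal breaks down for gadget elements of $R_i^t$ and, for $k\ge2$, for gadget elements of $S_i$.

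\textbf{The case $f\in R_i^t$.} Here you set $c(v_i)=c(v_{i+1})=a$, colour $C$ with two colours and give $h$ the third colour. Then $v_{i-1}$ carries the second cycle colour and $h$ the third. Both lie in $N(v_i)$ outside the rim of the intact copy $R_i^{3-t}$, which is still attached to the present edge $v_iv_{i+1}$. By the very criterion you quote (Lemma~\ref{L_force}), $N[v_i]$ or some rim neighbourhood of $R_i^{3-t}$ is then bad, whatever you put on $R_i^{3-t}$, $R_i^t$ and $S_i$. The forcing arguments in fact fix the shape of any valid colouring:
\begin{itemize}
\item $c(v_{i-1})\ne a$, because $v_{i-1}v_i$ carries two intact copies;
\item $c(h)\ne a$, because otherwise the intact $S_i$ already shows both other colours at $v_i$;
\item hence $c(h)=c(v_{i-1})$.
\end{itemize}
This is what the paper does: $c(v_1)=c(v_2)=3$, $c(h)=c(v_n)=2$, and rim $2,2,1,2$ on $R_1^2$, so that $1$ is unique in $N[v_1]$. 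Because $c(h)$ then repeats on $C$, the unique colour of $N[h]$ has to be planted on a spoke rim, via $c(s^2_4)=1$. In your second bullet, giving $h$ the third colour is right only for $f=v_iv_{i+1}$ itself, where $v_{i+1}\notin N(v_i)$.

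\textbf{The case $f\in S_i$.} This case is left open, and your first bullet (standard colouring whenever the ends differ) would block it. Take $f\in S_i$ other than the spoke. Every cycle edge keeps two intact copies and every spoke is present. So $C$ is properly coloured, some colour $d\ne c(h)$ occurs exactly once on $C$, and $c(h)$ occurs on $k$ vertices of $C$. Two consequences follow for $k\ge2$:
\begin{itemize}
\item Your literal suggestion, $c(h)$ used exactly once on $C$, leaves $N[h]$ bad.
\item Monochromatic spokes $hv_j$ with intact $S_j$ cannot be avoided.
\end{itemize}
These spokes are not all conflicts, as you claim. Only the $c(h)$-coloured neighbour of the $d$-vertex is one, and it must be $v_i$. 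But each such $v_j$ needs care. Both cycle neighbours of $v_j$ have colour $b$, the colour other than $c(h)$ and $d$. So Lemma~\ref{L_force} applied to $S_j$ requires $d$ to be absent from $N(v_j)$ outside $S_j$. Standard colourings of $R_j^t$ and $R_{j-1}^t$, however, put $d$ on every rim vertex of these gadgets adjacent to $v_j$. This is why the paper replaces standard colourings by rules (G1) and (G2), which use only $c(h)$ and $b$ on the rims adjacent to such $v_j$. It then uses (G3) on $S_j$ to make $d$ unique in $N[v_j]$. This global scheme, not the search on a single damaged $W(x,y)$, is the heart of the proof, and your proposal supplies no colourings for it.

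\textbf{Smaller points.}
\begin{itemize}
\item For $f=v_i$, the gadgets $S_i,R_i^1,R_i^2$ lose their apex, so (W1) does not apply to them. They need a separate rim colouring.
\item For $f=hv_i$, your first attempt actually works, since $v_i\notin N[h]$.
\item The rotation $v_i\mapsto v_{i+1}$ does preserve the apex orientation of the gadgets.
\end{itemize}
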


We prove Theorem~\ref{Tm_crit} at the end of this section, since the proof needs a
reduction by automorphisms and three auxiliary lemmas. A vertex or an edge $f$ of
$H_k$ is a \emph{wheel element} if it is a vertex or an edge of $\W_n$. Otherwise
$f$ is a \emph{gadget element}. We first reduce to $22$ types of elements. Then we
treat wheel elements in Lemma~\ref{L_wheel} and gadget elements in
Lemma~\ref{L_base}.

\paragraph{Types of elements.} The following maps are automorphisms of $H_k$.
The first is the rotation $v_i\mapsto v_{i+1}$, $s^i_j\mapsto s^{i+1}_j$,
$r^{i,t}_j\mapsto r^{i+1,t}_j$, which fixes $h$. The second, for every $i$,
exchanges $r^{i,1}_j$ and $r^{i,2}_j$ for $j=1,\dots,4$. The third, for every
gadget, exchanges its rim vertices $r_1$ and $r_3$ and fixes all other vertices.
This is an automorphism since $r_1$ and $r_3$ have the same neighbours
$r_2,r_4,x$. Using these automorphisms, every wheel element is equivalent to one
of $h$, $v_1$, $hv_1$, $v_1v_2$. Every gadget element is equivalent to an element
of $S_1$ or of $R_1^1$ of one of the nine forms
\[
r_1,\ r_2,\ r_4,\ r_1r_2,\ r_4r_1,\ xr_1,\ xr_2,\ xr_4,\ yr_4.
\]
Here $r_1r_2r_3r_4$ is the rim of the gadget, $x=v_1$ is its apex and $y$ is its
other attachment vertex. So $y=h$ for $S_1$ and $y=v_2$ for $R_1^1$. The elements
$r_3$, $r_2r_3$, $r_3r_4$ and $xr_3$ are equivalent to $r_1$, $r_1r_2$, $r_4r_1$
and $xr_1$, respectively. This gives $4+18=22$ types.

\begin{lemma}\label{L_wheel}
For every $k\ge1$ and every wheel element $f$ of $H_k$, $\cf(H_k-f)\le3$.
\end{lemma}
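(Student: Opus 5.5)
The plan is to exhibit, for each of the four representative wheel elements $h$, $v_1$, $hv_1$, $v_1v_2$, an explicit colouring of $H_k-f$ with colours $1,2,3$. Every gadget will be coloured according to (W1) or (W2), or by an ad hoc rule when its apex is deleted. The check then reduces to two things: the rim vertices, which are handled by (W1)/(W2), and the closed neighbourhoods of the wheel vertices, where the intention is always to make the vertex's own colour unique. Recall from \eqref{eq_loc} that $N[v_i]$ consists of $h$, $v_{i\pm1}$, the two vertices $r^{i-1,t}_4$, and the rims of $S_i,R_i^1,R_i^2$. A standard colouring of $W(x,y)$ puts on its rim the colour different from $c(x)$ and $c(y)$.

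\emph{$f=h$.} Colour $C$ properly with three colours and colour every remaining gadget standardly. Each $S_i$ has lost $y=h$, so (W1) with $y$ absent applies, and we give its rim any colour $\ne c(v_i)$. For each $v_i$, its colour then avoids its cycle neighbours and all rims it meets, so it is unique in $N[v_i]$.

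\emph{$f=hv_1$.} Colour $v_1$ with $d$ and colour $v_2,\dots,v_n$ alternately $a,b$, which is possible since $n-1$ is even. Give $h$ colour $d$ as well. All gadgets other than $S_1$ have ends of different colours, so we colour them standardly. Colour $S_1$ by (W2), where $c(v_1)=c(h)=d$, choosing its rim colours from $\{a,b\}$. Then no $s^i_4$ has colour $d$, so $d$ is unique in $N[h]$, because $v_1\notin N(h)$. Every $v_i$ with $i\ne1$ has its colour unique as in the previous case. For $v_1$, its neighbours $v_2,v_n$ and all rims meeting it avoid $d$, and $h$ is no longer adjacent to it, so $d$ is unique in $N[v_1]$.

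\emph{$f=v_1v_2$.} Colour $v_2,v_3,\dots,v_n,v_1$ alternately $a,b,\dots,a$, so that $c(v_1)=c(v_2)=a$, and give $h$ colour $d$. Colour every gadget standardly except $R_1^1,R_1^2$, which get (W2) with colour $d$ on $r_1$ and colour $b$ on $r_2,r_3,r_4$. Deleting the edge $xy$ does not change the rim neighbourhoods, so (W2) still applies. In $N[h]$ the only vertex of colour $d$ is $h$. In $N[v_1]$ and $N[v_2]$, the colour $a$ appears only on the vertex itself. This holds because $r^{1,t}_4$ has colour $b$, the rims of $R_n$ have colour $d$, and the rims of $S_1$, $S_2$ and $R_2$ avoid $a$.

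\emph{$f=v_1$.} Colour the path $v_2\cdots v_n$ alternately by $a,b$, give $h$ colour $d$, and colour standardly all gadgets that keep both ends. The gadgets $R_n^t$ have lost $y$ and are handled by (W1) with $y$ absent. The main obstacle is the three gadgets $S_1,R_1^1,R_1^2$, which have lost their apex. The neighbourhood $N[r_4]$ of each such rim contains the surviving end $y$, and this colouring must not spoil $N[y]$. Here we colour $r_1,r_3$ with $p=c(y)$ and $r_2,r_4$ with some $q\ne p$. This makes $N[r_1]$, $N[r_2]$ and $N[r_3]$ non-monochromatic, and $q$ is unique in $N[r_4]=\{r_1,r_3,r_4,y\}$. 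Since only $r_4$ touches $y$, and $q\ne c(y)$, the colour $c(y)$ remains unique in $N[y]$. For $y=h$ this is the colour $d$, and for $y=v_2$ it is the colour $a$. All other wheel vertices are fine as before.
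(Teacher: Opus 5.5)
Your proof is correct and follows the paper's argument. Both give explicit colourings for the four representatives $h$, $v_1$, $hv_1$, $v_1v_2$, colour intact gadgets standardly, handle damaged ones by (W1) or (W2), and use an ad hoc rim colouring for the three apex-less gadgets when $f=v_1$. The only differences are harmless choices of rim colours: your $p,q,p,q$ with $p=c(y)$ versus the paper's $p,p,q,q$ with $p,q\ne c(y)$, and your $d,b,b,b$ versus the paper's $b,d,d,d$ on $R_1^t$ when $f=v_1v_2$; all of these check out.
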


\begin{proof}
In each case we colour the vertices of $\W_n-f$ and colour a few gadgets
explicitly. Every other gadget $W(x,y)$ is coloured standardly. This is possible
since $x$ and $y$ will be present and $c(x)\ne c(y)$. In each case we check that
no vertex of $\W_n-f$ has a neighbour of its own colour. Then its closed
neighbourhood is good. For $2\le j\le n$ let $\pi(j)=1$ if $j$ is even and
$\pi(j)=2$ if $j$ is odd. Note that $\pi(n)=2$.

\emph{(a) $f=h$.} Let $c(v_1)=3$ and $c(v_j)=\pi(j)$ for $2\le j\le n$. This is a
proper colouring of $C$. The gadgets $R_i^t$ are standard. Each $S_i$ has lost
$h$. Its rim gets one colour different from $c(v_i)$, so its rim vertices are
fine by (W1). No $v_i$ has a neighbour of its colour. Indeed, $C$ is properly
coloured, and all rim vertices adjacent to $v_i$ avoid $c(v_i)$. These are the
rim vertices of $S_i$ and $R_i^t$ and the vertices $r^{i-1,t}_4$.

\emph{(b) $f=v_1$.} Let $c(h)=3$ and $c(v_j)=\pi(j)$ for $2\le j\le n$. The
gadgets $S_1,R_1^1,R_1^2$ have lost their apex. Let $y$ be the remaining
attachment vertex, so $y$ is $h$, $v_2$, $v_2$, respectively. Let $p,q$ be the two
colours different from $c(y)$, in either order. Colour $r_1,r_2$ by $p$ and
$r_3,r_4$ by $q$. Then $N[r_1]=\{r_4,r_1,r_2\}$, $N[r_2]=\{r_1,r_2,r_3\}$ and
$N[r_3]=\{r_2,r_3,r_4\}$ are not monochromatic, and $p$ is unique in
$N[r_4]=\{r_3,r_4,r_1,y\}$. The gadgets $R_n^1,R_n^2$ have lost $v_1$. Their rims
get colour $3\ne c(v_n)$, which is fine by (W1). All other gadgets are standard.
No neighbour of $h$ has colour $3$. Indeed, the $v_j$ have colours $1,2$, the
vertices $s^j_4$ with $j\ge2$ avoid $3$, and $c(s^1_4)=q\ne3$. No neighbour of
$v_2$ has colour $c(v_2)$, since $c(r^{1,t}_4)=q\ne c(v_2)$. No neighbour of
$v_n$ has colour $c(v_n)$, since the rims of $R_n^t$ have colour $3$. The other
$v_j$ are as in (a).

\emph{(c) $f=hv_1$.} Let $c(h)=c(v_1)=3$ and $c(v_j)=\pi(j)$ for $2\le j\le n$.
Colour the rim $s^1_1s^1_2s^1_3s^1_4$ of $S_1$ by $1,2,2,2$, which is fine by
(W2). All other gadgets are standard, since their attachment vertices have
distinct colours. The edge $hv_1$ is deleted, and the rim of $S_1$ avoids $3$. So
neither $h$ nor $v_1$ has a neighbour of colour $3$. The other $v_j$ are as in
(a).

\emph{(d) $f=v_1v_2$.} Let $c(h)=3$, $c(v_1)=1$ and $c(v_j)=\pi(j)$ for
$2\le j\le n$. So $c(v_1)=c(v_2)=1$ and $c(v_n)=2$. Colour the rims of $R_1^1$ and
$R_1^2$ by $2,3,3,3$, which is fine by (W2). All other gadgets are standard. The
edge $v_1v_2$ is deleted, and the rims of $R_1^t$ avoid $1$. So neither $v_1$ nor
$v_2$ has a neighbour of colour $1$. The vertex $h$ is as in (b), and the other
$v_j$ are as in (a).
\end{proof}

\paragraph{Gadget elements.} Let $e$ be a gadget element of $S_1$ or of $R_1^1$.
Consider the gadget that contains $e$, and the edge that carries this gadget. By
the argument in the proof of Theorem~\ref{Tm_Hk}, this edge is monochromatic in
every $3$-CF-colouring of $H_k-e$. So we first colour such a damaged gadget.

\begin{lemma}\label{L_dam}
Let $f$ be one of the nine elements $r_1$, $r_2$, $r_4$, $r_1r_2$, $r_4r_1$,
$xr_1$, $xr_2$, $xr_4$, $yr_4$ of $W(x,y)$. Let $\{a,b,d\}=\{1,2,3\}$, and let
$c(x)=c(y)=a$. Colour the rim $r_1r_2r_3r_4$ of $W(x,y)-f$ by the word $w_f$ given
by
\[
\begin{array}{c|ccccccccc}
f & r_1 & r_2 & r_4 & r_1r_2 & r_4r_1 & xr_1 & xr_2 & xr_4 & yr_4\\ \hline
w_f & {-}aba & a{-}ab & bbb{-} & bbaa & baab & dbbb & adab & bbbb & bbbb
\end{array}
\]
A dash marks a deleted vertex. Then in $W(x,y)-f$ the closed neighbourhood of
every rim vertex is good. Moreover, every rim vertex adjacent to $x$ or to $y$
has colour $a$ or $b$.
\end{lemma}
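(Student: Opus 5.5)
This is a finite verification, so the plan is to check the nine cases directly. First I fix the structure of $W(x,y)$. The rim is the cycle $r_1r_2r_3r_4$, $x$ is adjacent to all four rim vertices, and $y$ is adjacent only to $r_4$. Hence in the undamaged gadget
\[
N[r_1]=\{x,r_4,r_1,r_2\},\quad N[r_2]=\{x,r_1,r_2,r_3\},\quad N[r_3]=\{x,r_2,r_3,r_4\},\quad N[r_4]=\{x,y,r_3,r_4,r_1\}.
\]
For each $f$ I delete the relevant vertex or edge from these sets, substitute the colours from $w_f$ together with $c(x)=c(y)=a$, and exhibit a unique colour in each set. The second claim is immediate: every word uses only $a$ and $b$, except for the single $d$ in $w_{xr_1}$ on $r_1$ and in $w_{xr_2}$ on $r_2$. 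In those two cases exactly that rim vertex has lost its edge to $x$, and it is not $r_4$, so it is adjacent to neither $x$ nor $y$.

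The cases split naturally by how much the word uses the colour $a$.

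\textbf{Words with no $a$.} These are $f=r_4$ and $f=xr_4,yr_4$, with words $bbb$ and $bbbb$.
\begin{itemize}
\item For $f=r_4$, each of $N[r_1],N[r_2],N[r_3]$ contains $x$ and is otherwise coloured $b$, so $a$ is unique.
\item For $xr_4$, the sets $N[r_1],N[r_2],N[r_3]$ contain $x$ with colour $a$ and no other vertex of colour $a$. The set $N[r_4]=\{y,r_3,r_4,r_1\}$ has $a$ only on $y$.
\item For $yr_4$, symmetrically $N[r_4]=\{x,r_3,r_4,r_1\}$ has $a$ only on $x$.
\end{itemize}
Thus (W1)-type reasoning covers these cases.

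\textbf{Words with a $d$.} These are $f=xr_1$ and $f=xr_2$.
\begin{itemize}
\item For $xr_1$ with $w=dbbb$, the set $N[r_1]=\{r_4,r_1,r_2\}$ is coloured $b,d,b$, so $d$ is unique. The sets $N[r_2]$ and $N[r_3]$ contain $x$, have colour $a$ only there, and so $a$ is unique. The set $N[r_4]$ contains $x,y$ coloured $a,a$, and $r_1$ coloured $d$, so $d$ is unique.
\item For $xr_2$ with $w=adab$, the set $N[r_2]=\{r_1,r_2,r_3\}$ is coloured $a,d,a$, so $d$ is unique. The set $N[r_1]=\{x,r_4,r_1,r_2\}$ is coloured $a,b,a,d$, so $b$ or $d$ is unique. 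The set $N[r_3]=\{x,r_2,r_3,r_4\}$ is coloured $a,d,a,b$. The set $N[r_4]=\{x,y,r_3,r_4,r_1\}$ is coloured $a,a,a,b,a$, so $b$ is unique.
\end{itemize}

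\textbf{Two-colour words using both $a$ and $b$.} These are $f=r_1$, $r_2$, $r_1r_2$ and $r_4r_1$. I use the fact that a set coloured by $a,b$ is good if and only if one colour appears exactly once.
\begin{itemize}
\item For $r_1$ with rim $-aba$, the sets are $N[r_2]=\{x,r_2,r_3\}$ coloured $a,a,b$; $N[r_3]$ coloured $a,a,b,a$; and $N[r_4]=\{x,y,r_3,r_4\}$ coloured $a,a,b,a$.
\item For $r_2$ with rim $a-ab$, the sets are $N[r_1]=\{x,r_4,r_1\}$ coloured $a,b,a$; $N[r_3]=\{x,r_3,r_4\}$ coloured $a,a,b$; and $N[r_4]$ coloured $a,a,a,b,a$.
\item For $r_1r_2$ with rim $bbaa$, the sets are $N[r_1]=\{x,r_4,r_1\}$ coloured $a,a,b$; $N[r_2]=\{x,r_2,r_3\}$ coloured $a,b,a$; $N[r_3]$ coloured $a,b,a,a$; and $N[r_4]$ coloured $a,a,a,a,b$.
\item For $r_4r_1$ with rim $baab$, the sets are $N[r_1]=\{x,r_1,r_2\}$ coloured $a,b,a$; $N[r_2]$ coloured $a,b,a,a$; $N[r_3]$ coloured $a,a,a,b$; and $N[r_4]=\{x,y,r_3,r_4\}$ coloured $a,a,a,b$.
\end{itemize}
In every one of these sets the colour $b$ appears exactly once, so each set is good.

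The main risk is purely bookkeeping: reading the deleted edge correctly for $r_4r_1$ and $r_1r_2$, and remembering that $y$ lies only in $N[r_4]$. I would present the check compactly as the table of colour multisets above, noting that each column has a colour of multiplicity one.
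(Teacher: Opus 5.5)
Your proposal is correct and is essentially the paper's own proof. You handle the second claim the same way (the colour $d$ occurs only on the vertex whose edge to $x$ was deleted, and that vertex is not adjacent to $y$), you dispatch $f\in\{r_4,xr_4,yr_4\}$ by noting that $a$ is unique in every rim neighbourhood, and you check the remaining six cases by listing colour multisets that agree with the paper's lists; your grouping of the cases is only a presentational choice.
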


\begin{proof}
The colour $d$ is used only on $r_1$ for $f=xr_1$ and on $r_2$ for $f=xr_2$. In
both cases the edge from $x$ to this vertex is deleted, and $y$ is not adjacent
to it. This proves the second statement. For $f\in\{r_4,xr_4,yr_4\}$, every
closed neighbourhood of a rim vertex contains exactly one vertex of colour $a$,
and all its other vertices have colour $b$. In the remaining cases, the colours
on the closed neighbourhoods of the rim vertices are as follows.
\begin{align*}
f=r_1:&\ N[r_2]=\{x,r_2,r_3\}{:}\ aab,\ N[r_3]{:}\ aaba,\ N[r_4]=\{x,y,r_3,r_4\}{:}\ aaba;\\
f=r_2:&\ N[r_1]=\{x,r_4,r_1\}{:}\ aba,\ N[r_3]=\{x,r_3,r_4\}{:}\ aab,\ N[r_4]{:}\ aaaba;\\
f=r_1r_2:&\ N[r_1]=\{x,r_4,r_1\}{:}\ aab,\ N[r_2]=\{x,r_2,r_3\}{:}\ aba,\ N[r_3]{:}\ abaa,\ N[r_4]{:}\ aaaab;\\
f=r_4r_1:&\ N[r_1]=\{x,r_1,r_2\}{:}\ aba,\ N[r_2]{:}\ abaa,\ N[r_3]{:}\ aaab,\ N[r_4]=\{x,y,r_3,r_4\}{:}\ aaab;\\
f=xr_1:&\ N[r_1]=\{r_4,r_1,r_2\}{:}\ bdb,\ N[r_2]{:}\ adbb,\ N[r_3]{:}\ abbb,\ N[r_4]{:}\ aabbd;\\
f=xr_2:&\ N[r_1]{:}\ abad,\ N[r_2]=\{r_1,r_2,r_3\}{:}\ ada,\ N[r_3]{:}\ adab,\ N[r_4]{:}\ aaaba.
\end{align*}
In every case some colour is unique.
\end{proof}

\paragraph{The colourings.} Let $e$ be a gadget element of $S_1$ or of $R_1^1$ of
one of the nine forms $f$ of Lemma~\ref{L_dam}. Recall that $x=v_1$. Also $y=h$
if $e\in S_1$, and $y=v_2$ if $e\in R_1^1$. We define a colouring $c$ of $H_k-e$
with $c(h)=2$. On $C$ let
\begin{align*}
&c(v_i)=2\ (i\text{ odd},\ i\le2k-1),\quad c(v_i)=3\ (i\text{ even}),\quad
c(v_n)=1 &&\text{if }e\in S_1,\\
&c(v_1)=c(v_2)=3,\quad c(v_i)=2\ (i\ge3\text{ odd}),\quad
c(v_i)=3\ (i\ge4\text{ even}) &&\text{if }e\in R_1^1.
\end{align*}
Thus $C$ is properly coloured, except for the edge $v_1v_2$ if $e\in R_1^1$. The
spokes $hv_i$ with $c(v_i)=2$ are monochromatic. The gadget containing $e$ is
attached to a monochromatic edge. Its rim is coloured by the word $w_f$ of
Lemma~\ref{L_dam} with $(a,b,d)=(2,3,1)$ if $e\in S_1$ and $(a,b,d)=(3,2,1)$ if
$e\in R_1^1$. Every other gadget $W(x',y')$ is coloured by the following rules.
Rim colours are listed in the order $r_1,\dots,r_4$.
\begin{description}
\item[(G1)] If $c(x')\ne c(y')$ and $c(x')\ne2$, all rim vertices get colour $2$.
The only exception is the rim of $S_2$ if $e\in R_1^1$, which gets $2,2,2,1$.
\item[(G2)] If $c(x')=2\ne c(y')$, the rim gets $2,3,2,2$. Here $c(y')=3$.
\item[(G3)] If $c(x')=c(y')$, the vertex $r_3$ gets colour $1$. The other rim
vertices get the colour different from $1$ and $c(x')$.
\end{description}
Rule (G3) applies to the gadgets $S_i$, $i\ne1$, with $c(v_i)=2$. If
$e\in R_1^1$, it also applies to $R_1^2$.

\begin{lemma}\label{L_base}
For every gadget element $e$ of one of the $18$ types and every $k\ge1$, the
colouring $c$ is a $3$-CF-colouring of $H_k-e$.
\end{lemma}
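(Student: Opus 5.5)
The plan is a direct verification, organised by which closed neighbourhoods can be affected. The vertices of $H_k-e$ are of three kinds: rim vertices of the damaged gadget, rim vertices of the other gadgets, and wheel vertices $h,v_1,\dots,v_n$. The closed neighbourhood of a rim vertex of an undamaged gadget lies inside that gadget together with its two attachment vertices. So these can be checked gadget by gadget, using only the colours of $x'$ and $y'$.

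\textbf{Damaged gadget.} Its rim vertices are fine by Lemma~\ref{L_dam}, since $c(x)=c(y)=a$ holds in both cases. If $e\in S_1$, then $c(v_1)=c(h)=2=a$. If $e\in R_1^1$, then $c(v_1)=c(v_2)=3=a$.

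\textbf{Undamaged gadgets.} I check the three rules one at a time.
\begin{itemize}
\item Rule (G1) is the standard colouring, which is fine by (W1). Colour $2$ is the third colour, since $c(x')\ne2$, and $c(y')\ne2$ must also be checked. Indeed, a gadget with $c(x')\ne c(y')$ and $c(x')\ne2$ is either an $S_i$ with $c(v_i)\in\{1,3\}$ and $c(h)=2$, or an $R_i^t$ with apex coloured $3$ or $1$. So I must verify that the other end is never coloured $2$ when (G1) is applied with all-$2$ rims. If it is, (W1) still gives that $c(x')$ is unique in each $N[r]$, since no rim vertex has colour $c(x')$. So (W1) covers this case regardless.
\item The exceptional rim $2,2,2,1$ of $S_2$ when $e\in R_1^1$ has $c(v_2)=3$. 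There $3$ is still absent from the rim, so (W1) applies again.
\item Rule (G2), with rim $2,3,2,2$, $c(x')=2$ and $c(y')=3$, needs a short direct check. In $N[r_1]$ and $N[r_3]$ colour $3$ sits only on $r_2$. $N[r_2]=\{x',r_1,r_2,r_3\}$ has $3$ unique. $N[r_4]=\{x',y',r_1,r_3,r_4\}$ has $3$ only on $y'$.
\item Rule (G3) is (W2) with $j=3$, which is allowed since $j\ne2$.
\end{itemize}

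\textbf{Wheel vertices.} Here I use \eqref{eq_loc}, aiming to exhibit a unique colour in each closed neighbourhood.
\begin{itemize}
\item For most $v_i$ with $i\ge3$, the cycle is properly coloured near $v_i$. All adjacent rim vertices avoid $c(v_i)$ unless $c(v_i)=2$. So $c(v_i)$ is unique except when $c(v_i)=2$ and spokes or (G2) rims introduce colour $2$.
\item For $c(v_i)=2$ with $c(h)=2$, the rims of $R_i^t$ follow (G2) and the rim of $S_i$ follows (G3). I will instead find colour $1$ unique. It appears exactly on $s^i_3$, provided $v_{i\pm1}$ and the $r_4^{i-1,t}$ avoid $1$. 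This requires $v_n$, coloured $1$ in the $S_1$ case, not to be adjacent to such a $v_i$; this needs care at $i=1$ and $i=n-1$.
\item For $h$, colour $2$ occurs many times, on $h$ and on the odd $v_i$. Colour $1$ is unique on $v_n$ in the $S_1$ case. In the $R_1^1$ case it is unique on $s^2_4$ via the exception. I check that no other $s^i_4$ carries $1$; under (G3) $s^i_4$ gets $3$.
\item For $v_1,v_2,v_n$, the damaged gadget enters. The second statement of Lemma~\ref{L_dam} guarantees that colour $d=1$ does not reach $x$ or $y$, so $1$ stays controlled.
\end{itemize}

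\textbf{Main obstacle.} The bookkeeping at $v_1$, $v_2$, $v_n$ and $h$ is the hardest part, and the small case $k=1$ ($n=3$) is the most delicate, since $v_n$ is then adjacent to both $v_1$ and $v_2$. I would handle this by listing the full multiset of colours of $N[v]$ for these four vertices in each of the two cases $e\in S_1$ and $e\in R_1^1$. The dependence on $f$ is only through the colours $a,b$ on the rim vertices adjacent to $v_1$ or $y$. I expect a single unique colour to work for all nine $f$, namely $1$ at $v_1$ and $h$, and the apex colour otherwise.
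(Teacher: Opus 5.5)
Your decomposition is the same as the paper's. You treat the damaged rim by Lemma~\ref{L_dam} (using $c(x)=c(y)=a$), and the undamaged rims by (W1) for (G1) and the exceptional rim of $S_2$, by (W2) with $j=3$ for (G3), and by a direct check for (G2). The wheel vertices come last, via \eqref{eq_loc}. The rim part is correct. In (G1) the end $y'$ really is coloured $2$ for every $S_i$ with $c(v_i)\ne2$, and, as you note, (W1) does not need $c(y')\ne2$.

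The gap is in the wheel vertices, which carry the content of the lemma. You postpone the checks at $v_1,v_2,v_n,h$, and the rule you announce for them (``$1$ at $v_1$ and $h$, the apex colour otherwise'') fails when $e\in R_1^1$.
\begin{itemize}
\item At $v_2$: here $c(v_1)=c(v_2)=3$. For $f\in\{r_1,r_1r_2\}$ also $c(r^{1,1}_4)=a=3$ with $r^{1,1}_4\in N(v_2)$. So the apex colour is not unique in $N[v_2]$. The unique colour there is $1$, and its only occurrence is $s^2_4$. This is the reason for the exceptional rim $2,2,2,1$ of $S_2$, which you use only at $h$.
\item At $v_n$: here $c(v_n)=2=c(h)$, so $v_n$ belongs to your colour-$2$ bullet, not to ``apex colour''. $R_n^t$ falls under (G2) because $c(v_1)=3$, and $1$ is unique on $s^n_3$.
\item At $v_1$: you predicted $1$ correctly, but the vertex of colour $1$ is $r^{1,2}_3$, from (G3) on $R_1^2$, not $v_n$. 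One also needs $c(r^{n,t}_4)=2$ from (G2).
\end{itemize}

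The correct rule is this. Colour $1$ is the unique colour at every wheel vertex incident with a monochromatic wheel edge, and $c(v)$ is unique at the others. In the $R_1^1$ case these vertices are $h$, $v_1$, $v_2$ and the odd $v_i$ with $i\ge3$, including $v_n$. Colour $1$ works at exactly these vertices because Lemma~\ref{L_dam} keeps it away from $x$ and $y$. You still have to locate the single vertex of colour $1$ in each such $N[v]$, as the paper does.

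Your predictions are right for $e\in S_1$: there $1$ is unique on $v_n$ in $N[v_1]$, $N[v_n]$ and $N[h]$, and $3$ is unique in $N[v_2]$. Your worry about $i=n-1$ in that case is unfounded, since $c(v_{n-1})=3$.
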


\begin{proof}
Rim vertices of the gadget containing $e$ have the same closed neighbourhoods in
$H_k-e$ as in $W(x,y)-e$. So their closed neighbourhoods are good by
Lemma~\ref{L_dam}. The rims coloured by (G1) and (G3) are as in (W1) and (W2),
respectively. Under (G2), the colours on $N[r_1]$ are $2,2,2,3$, on $N[r_2]$ they
are $2,2,3,2$, on $N[r_3]$ they are $2,3,2,2$, and on $N[r_4]$ they are
$2,3,2,2,2$. The colour $3$ is unique in each of these sets. It remains to
consider $h$ and the vertices $v_i$, using \eqref{eq_loc}.

Let $e\in S_1$. Let $i$ be even. Then $c(v_i)=3$ is unique in $N[v_i]$. Indeed,
the vertices $h,v_{i-1},v_{i+1}$ have colours $1,2$. The rim vertices of $S_i$,
$R_i^1$, $R_i^2$ have colour $2$ by (G1), and $r^{i-1,t}_4$ has colour $2$ by
(G2). Now let $i$ be odd with $3\le i\le2k-1$. Then $1$ is unique in $N[v_i]$,
and it appears on $s^i_3$. Indeed, the vertices $h,v_{i\pm1}$ have colours $2,3$.
The rim of $S_i$ is $3,3,1,3$ by (G3), and the rim of $R_i^t$ is $2,3,2,2$ by
(G2). The vertex $r^{i-1,t}_4$ has colour $2$ by (G1). The colour $c(v_n)=1$ is
unique in $N[v_n]$. Indeed, $h,v_{n-1},v_1$ have colours $2,3,2$, and all rim
vertices in $N[v_n]$ have colour $2$ by (G1). In $N[v_1]$ the colour $1$ appears
only on $v_n$. Indeed, $v_1$ and $h$ have colour $2$, and $v_2$ has colour $3$.
The rims of $R_1^1,R_1^2$ have colours $2,3$ by (G2), and $r^{n,t}_4$ has colour
$2$ by (G1). The rim vertices of $S_1-e$ adjacent to $v_1$ have colours $2,3$ by
Lemma~\ref{L_dam}. Finally, $1$ is unique in $N[h]$, and it appears on $v_n$.
Indeed, the other $v_i$ have colours $2,3$, and $c(s^i_4)\in\{2,3\}$ for $i\ge2$
by (G1) and (G3). If $s^1_4$ is adjacent to $h$, then $c(s^1_4)\in\{2,3\}$ by
Lemma~\ref{L_dam}.

Let $e\in R_1^1$. For $i\ge3$ the arguments for even and for odd $i$ are as
above. For $i=3$ note that $r^{2,t}_4$ has colour $2$ by (G1). For $i=n$ note
that $c(v_1)=3$. In $N[v_2]$ the colour $1$ appears only on $s^2_4$. Indeed,
$v_1,v_2$ have colour $3$, and $h,v_3$ have colour $2$. The rim of $S_2$ is
$2,2,2,1$, and the rims of $R_2^t$ have colour $2$. The vertex $r^{1,2}_4$ has
colour $2$ by (G3). If $r^{1,1}_4$ is adjacent to $v_2$, then it has colour $2$
or $3$ by Lemma~\ref{L_dam}. In $N[v_1]$ the colour $1$ appears only on
$r^{1,2}_3$. Indeed, $v_1,v_2$ have colour $3$, and $h,v_n$ have colour $2$. The
rim of $S_1$ has colour $2$ by (G1), and the rim of $R_1^2$ is $2,2,1,2$. The
vertex $r^{n,t}_4$ has colour $2$ by (G2). The rim vertices of $R_1^1-e$ adjacent
to $v_1$ have colours $2,3$. Finally, $1$ is unique in $N[h]$, and it appears on
$s^2_4$. Indeed, the $v_i$ have colours $2,3$, and $c(s^i_4)\in\{2,3\}$ for
$i\ne2$ by (G1) and (G3).
\end{proof}

\begin{proof}[Proof of Theorem~\ref{Tm_crit}]
By Theorem~\ref{Tm_Hk}, $\cf(H_k)=4$. So it remains to show that
$\cf(H_k-f)\le3$ for every vertex or edge $f$ of $H_k$. Automorphisms of $H_k$
preserve this property. So we may assume that $f$ is one of the $22$ types
listed above. If $f$ is a wheel element, then $\cf(H_k-f)\le3$ by
Lemma~\ref{L_wheel}. Otherwise $f$ is a gadget element of one of the $18$ types,
and $\cf(H_k-f)\le3$ by Lemma~\ref{L_base}.
\end{proof}

\vspace{2mm}\bigskip\noindent\textbf{AI declaration.}~~Anthropic Claude.ai was used in the preparation
of this manuscript. It assisted with the arguments and their exposition, and with
the verification of the proofs. All results were verified by the authors, who take
full responsibility for the content.

\vspace{2mm}\bigskip\noindent\textbf{Acknowledgements.}
R.~\v{S}krekovski acknowledges partial support from the Slovenian Research and
Innovation Agency program P1--0383, the bilateral Slovenian--Croatian project
BI-HR/25-27-004 and the annual work program of Rudolfovo. J.~Sedlar
acknowledges support from the Croatian Ministry of Science, Education and Youth
through the bilateral Croatian--Slovenian project 2025--26, and from the
NextGenerationEU instrument via IP-UNIST-17 (GEORAZ).

\appendix
\section{Colourings for Theorem~\ref{T25crit}}\label{AppA}

Each entry of Tables~\ref{TabCert1} and~\ref{TabCert2} consists of a vertex or an edge $f$
of $G$ and a colouring of $G-f$. The colouring is given by the colours of the
vertices $0,1,\dots,24$, grouped as $0$--$3$ (hubs), $A$, $B$, $C$, $D$, $E$. A
dash marks the deleted vertex.

\begin{table}[t]
\centering\scriptsize\setlength{\tabcolsep}{2.5pt}
\begin{tabular}{@{}rl@{\qquad}rl@{\qquad}rl@{}}
\hline
$f$ & colouring of $G-f$ & $f$ & colouring of $G-f$ & $f$ & colouring of $G-f$\\
\hline
$0$ & \texttt{-123\,33331\,33311\,3233\,23313\,31} & $1$ & \texttt{2-33\,33331\,33311\,3231\,23311\,31} & $2$ & \texttt{21-3\,33331\,33311\,3233\,23313\,31}\\
$3$ & \texttt{211-\,33331\,33311\,3333\,13313\,31} & $4$ & \texttt{3123\,-2333\,33311\,3233\,23313\,31} & $5$ & \texttt{3233\,2-222\,33312\,3333\,23132\,31}\\
$6$ & \texttt{3123\,33-31\,33311\,3233\,23313\,31} & $7$ & \texttt{3123\,133-3\,33311\,3233\,23313\,31} & $8$ & \texttt{3233\,2332-\,33321\,3333\,13332\,31}\\
$9$ & \texttt{3233\,33331\,-3311\,3333\,23331\,31} & $10$ & \texttt{3233\,33331\,3-213\,3333\,33311\,31} & $12$ & \texttt{3233\,33331\,333-1\,3333\,33311\,31}\\
$13$ & \texttt{3213\,33331\,1331-\,3333\,33311\,33} & $14$ & \texttt{2333\,33331\,33311\,-233\,23311\,31} & $16$ & \texttt{2333\,33331\,33311\,33-2\,23311\,31}\\
$17$ & \texttt{2111\,33331\,33311\,333-\,13313\,31} & $18$ & \texttt{3313\,23331\,13211\,3213\,-3311\,33} & $19$ & \texttt{1111\,23331\,32311\,3233\,1-331\,31}\\
$21$ & \texttt{3233\,31232\,33321\,3333\,333-3\,31} & $22$ & \texttt{2333\,33331\,33311\,3231\,2221-\,31} & $23$ & \texttt{2133\,33331\,33311\,3233\,23313\,-1}\\
$24$ & \texttt{2133\,33331\,33311\,3233\,23313\,3-} & $0\,1$ & \texttt{3213\,33331\,33312\,3333\,33313\,32} & $0\,2$ & \texttt{3123\,33231\,33311\,3233\,23313\,31}\\
$0\,3$ & \texttt{3123\,23221\,13311\,3233\,33312\,31} & $0\,4$ & \texttt{3123\,23333\,33311\,3233\,23313\,31} & $0\,6$ & \texttt{3123\,33331\,33311\,3233\,23313\,31}\\
$0\,7$ & \texttt{3123\,13333\,33311\,3233\,23313\,31} & $0\,8$ & \texttt{3123\,33332\,33311\,3233\,23313\,31} & $0\,9$ & \texttt{3213\,33331\,23313\,3333\,13313\,32}\\
$0\,12$ & \texttt{3233\,33331\,33321\,3333\,23331\,31} & $0\,13$ & \texttt{3213\,33331\,33312\,3333\,33313\,32} & $0\,21$ & \texttt{3233\,31232\,33321\,3333\,33313\,31}\\
$0\,22$ & \texttt{2333\,33331\,33311\,3213\,33312\,31} & $1\,2$ & \texttt{2133\,33331\,33311\,3233\,23313\,32} & $1\,3$ & \texttt{2131\,33331\,33311\,3233\,23313\,33}\\
$1\,5$ & \texttt{3233\,31232\,33321\,3333\,23132\,31} & $1\,6$ & \texttt{3123\,33331\,33311\,3233\,23313\,31} & $1\,7$ & \texttt{3123\,13333\,33311\,3233\,23313\,31}\\
$1\,14$ & \texttt{2333\,33331\,33311\,1233\,23311\,31} & $1\,16$ & \texttt{2333\,33331\,33311\,3213\,23311\,31} & $1\,17$ & \texttt{2333\,33331\,33311\,3231\,23311\,31}\\
$1\,18$ & \texttt{1111\,23331\,33211\,3233\,23311\,31} & $1\,19$ & \texttt{2333\,33331\,33311\,3213\,21213\,31} & $1\,22$ & \texttt{2333\,33331\,33311\,3213\,23311\,31}\\
$1\,23$ & \texttt{2133\,33331\,33311\,3233\,23313\,31} & $2\,3$ & \texttt{3213\,33331\,12211\,3133\,33311\,11} & $2\,9$ & \texttt{3213\,33331\,33313\,3333\,33313\,32}\\
$2\,10$ & \texttt{3213\,33331\,12311\,3333\,33313\,33} & $2\,13$ & \texttt{2133\,33331\,33311\,3233\,23313\,32} & $2\,23$ & \texttt{2133\,33331\,33311\,3233\,23313\,31}\\
$2\,24$ & \texttt{2113\,33331\,33311\,3333\,13313\,31} & $3\,17$ & \texttt{2111\,33331\,33311\,3333\,33311\,31} & $3\,24$ & \texttt{2113\,33331\,33311\,3333\,33311\,31}\\
$4\,7$ & \texttt{3123\,13333\,33311\,3233\,23313\,31} & $4\,8$ & \texttt{3333\,13331\,31213\,3213\,33312\,31} & $5\,8$ & \texttt{3233\,23222\,33321\,3333\,23132\,31}\\
$6\,8$ & \texttt{3123\,33331\,33311\,3233\,23313\,31} & $9\,12$ & \texttt{3233\,33331\,33311\,3133\,33313\,31} & $9\,13$ & \texttt{3233\,33331\,33311\,3333\,13313\,31}\\
$10\,13$ & \texttt{3233\,33331\,31113\,3333\,23331\,31} & $14\,16$ & \texttt{2333\,33331\,33311\,2323\,23311\,31} & $14\,17$ & \texttt{2111\,33331\,33311\,3113\,33311\,31}\\
$18\,21$ & \texttt{3313\,23331\,13211\,3213\,33311\,33} & $18\,22$ & \texttt{3313\,23331\,13211\,3213\,13311\,33} & $19\,22$ & \texttt{1111\,23331\,33211\,3233\,13331\,31}\\
$23\,24$ & \texttt{2133\,33331\,33311\,3233\,23313\,31} &  &  &  & \\
\hline
\end{tabular}
\caption{Colourings certifying that $G'_{25}$ is critical.}
\label{TabCert1}
\medskip
\centering\scriptsize\setlength{\tabcolsep}{2.5pt}
\begin{tabular}{@{}rl@{\qquad}rl@{\qquad}rl@{}}
\hline
$f$ & colouring of $G-f$ & $f$ & colouring of $G-f$ & $f$ & colouring of $G-f$\\
\hline
$0$ & \texttt{-333\,33312\,32123\,3132\,22212\,31} & $1$ & \texttt{3-33\,33312\,32123\,3132\,21132\,31} & $2$ & \texttt{33-2\,33312\,32123\,3323\,22122\,31}\\
$3$ & \texttt{333-\,33312\,32123\,2222\,22122\,31} & $4$ & \texttt{3333\,-3322\,32123\,3213\,33312\,31} & $5$ & \texttt{3332\,3-223\,33312\,3323\,22122\,31}\\
$7$ & \texttt{3333\,333-2\,32123\,3132\,33312\,31} & $8$ & \texttt{3233\,2332-\,33312\,3333\,23132\,31} & $9$ & \texttt{3332\,33312\,-3322\,3323\,22122\,31}\\
$10$ & \texttt{3332\,33312\,3-123\,3323\,22122\,31} & $12$ & \texttt{3332\,33312\,333-2\,3323\,22122\,31} & $13$ & \texttt{3233\,33312\,3112-\,3333\,23132\,31}\\
$14$ & \texttt{3333\,33312\,32123\,-233\,22122\,31} & $16$ & \texttt{3333\,33312\,32123\,33-2\,22122\,31} & $17$ & \texttt{3333\,33312\,32123\,332-\,22122\,31}\\
$18$ & \texttt{3333\,33312\,32123\,2133\,-3322\,31} & $19$ & \texttt{3333\,33312\,32123\,3132\,3-123\,31} & $21$ & \texttt{3333\,33312\,32123\,3132\,333-2\,31}\\
$22$ & \texttt{3333\,33312\,32123\,3132\,2222-\,31} & $23$ & \texttt{3332\,33312\,32123\,3323\,22122\,-3} & $24$ & \texttt{3233\,33312\,32123\,3333\,23132\,3-}\\
$0\,1$ & \texttt{3233\,33311\,33321\,3333\,22113\,31} & $0\,2$ & \texttt{3223\,33312\,21122\,3333\,22123\,21} & $0\,3$ & \texttt{3123\,23311\,11121\,3333\,22222\,31}\\
$0\,4$ & \texttt{3233\,33322\,33312\,3333\,23132\,31} & $0\,7$ & \texttt{3333\,33312\,32123\,3132\,33312\,31} & $0\,8$ & \texttt{3333\,33312\,32113\,3132\,33312\,31}\\
$0\,9$ & \texttt{3123\,33322\,13123\,3333\,22332\,31} & $0\,12$ & \texttt{3233\,33312\,33312\,3333\,22123\,31} & $0\,13$ & \texttt{3332\,33312\,33321\,3323\,22122\,31}\\
$0\,21$ & \texttt{3333\,33312\,32123\,3132\,33312\,31} & $0\,22$ & \texttt{3333\,33312\,32123\,3132\,22222\,31} & $1\,2$ & \texttt{3233\,33312\,32123\,3333\,23132\,31}\\
$1\,3$ & \texttt{3131\,33312\,32113\,3332\,22213\,33} & $1\,4$ & \texttt{3133\,32313\,33312\,3333\,22313\,32} & $1\,5$ & \texttt{3233\,21322\,33312\,3333\,23132\,31}\\
$1\,14$ & \texttt{3333\,33312\,32123\,1233\,22122\,31} & $1\,16$ & \texttt{3333\,33312\,32123\,3312\,22122\,31} & $1\,17$ & \texttt{3333\,33312\,32123\,3231\,22122\,32}\\
$1\,18$ & \texttt{3333\,33312\,32123\,2313\,21133\,31} & $1\,19$ & \texttt{3333\,33312\,32123\,3132\,21322\,31} & $1\,22$ & \texttt{3333\,33312\,32123\,3132\,22222\,31}\\
$1\,23$ & \texttt{3233\,33312\,32123\,3333\,23132\,23} & $2\,3$ & \texttt{3332\,33312\,32123\,3323\,22122\,31} & $2\,9$ & \texttt{3233\,33312\,21133\,3333\,23132\,31}\\
$2\,10$ & \texttt{3233\,33312\,32123\,3333\,23132\,31} & $2\,13$ & \texttt{1332\,33332\,33122\,3323\,22322\,31} & $2\,23$ & \texttt{3133\,33312\,32113\,3332\,22313\,31}\\
$2\,24$ & \texttt{3332\,33312\,32123\,3323\,22122\,31} & $3\,17$ & \texttt{3333\,33312\,32123\,2222\,22122\,31} & $3\,24$ & \texttt{3332\,33312\,32123\,3323\,22122\,32}\\
$4\,7$ & \texttt{3333\,33322\,32123\,2133\,33312\,31} & $4\,8$ & \texttt{3332\,33322\,33312\,3323\,22122\,31} & $5\,8$ & \texttt{2233\,23332\,22212\,3333\,23132\,31}\\
$9\,12$ & \texttt{3332\,33312\,33322\,3323\,22122\,31} & $9\,13$ & \texttt{3332\,33312\,33322\,3323\,22122\,31} & $10\,13$ & \texttt{3332\,33312\,32223\,3323\,22122\,31}\\
$14\,16$ & \texttt{3333\,33312\,32123\,2323\,22122\,31} & $14\,17$ & \texttt{3333\,33312\,32123\,2332\,22122\,31} & $18\,21$ & \texttt{3333\,33312\,32123\,3132\,33322\,31}\\
$18\,22$ & \texttt{3333\,33312\,32123\,3132\,23332\,31} & $19\,22$ & \texttt{3333\,33312\,32123\,3132\,32223\,31} & $23\,24$ & \texttt{3332\,33312\,32123\,3323\,22122\,23}\\
\hline
\end{tabular}
\caption{Colourings certifying that $G''_{25}$ is critical.}
\label{TabCert2}
\end{table}

\begin{thebibliography}{99}\footnotesize\setlength{\itemsep}{0pt}

\bibitem{Abel}Z.~Abel, V.~Alvarez, E.~D.~Demaine, S.~P.~Fekete, A.~Gour,
A.~Hesterberg, P.~Keldenich, C.~Scheffer, Conflict-free coloring of graphs,
\emph{SIAM J.\ Discrete Math.}\ \textbf{32} (2018) 2675--2702.

\bibitem{BKM}S.~Bhyravarapu, S.~Kalyanasundaram, R.~Mathew, A short note on
conflict-free coloring on closed neighborhoods of bounded degree graphs,
\emph{J.\ Graph Theory} \textbf{97} (2021) 553--556.

\bibitem{Even}G.~Even, Z.~Lotker, D.~Ron, S.~Smorodinsky, Conflict-free
colorings of simple geometric regions with applications to frequency
assignment in cellular networks, \emph{SIAM J.\ Comput.}\ \textbf{33} (2003)
94--136.

\bibitem{Fabrici}I.~Fabrici, B.~Lu\v{z}ar, S.~Rindo\v{s}ov\'{a},
R.~Sot\'{a}k, Proper conflict-free and unique-maximum colorings of planar
graphs with respect to neighborhoods, \emph{Discrete Appl.\ Math.}\
\textbf{324} (2023) 80--92.

\bibitem{GJS}M.~R.~Garey, D.~S.~Johnson, L.~Stockmeyer, Some simplified
NP-complete graph problems, \emph{Theoret.\ Comput.\ Sci.}\ \textbf{1} (1976)
237--267.

\bibitem{GST}R.~Glebov, T.~Szab\'{o}, G.~Tardos, Conflict-free colouring of
graphs, \emph{Combin.\ Probab.\ Comput.}\ \textbf{23} (2014) 434--448.

\bibitem{HlinenyMalik}P.~Hlin\v{e}n\'{y}, L.~M\'{a}lik, Conflict-free
coloring planar graphs with 4 colors, in: \emph{34th Annual European Symposium
on Algorithms (ESA 2026)}, LIPIcs \textbf{388}, Schloss Dagstuhl --
Leibniz-Zentrum f\"ur Informatik, 2026, pp.~33:1--33:20.

\bibitem{PachTardos}J.~Pach, G.~Tardos, Conflict-free colourings of graphs and
hypergraphs, \emph{Combin.\ Probab.\ Comput.}\ \textbf{18} (2009) 819--834.

\bibitem{Smorodinsky}S.~Smorodinsky, Conflict-free coloring and its
applications, in: I.~B\'ar\'any, K.~J.~B\"or\"oczky, G.~Fejes T\'oth, J.~Pach
(Eds.), \emph{Geometry -- Intuitive, Discrete, and Convex}, Bolyai Soc.\ Math.\
Stud.\ \textbf{24}, Springer, 2013, pp.~331--389.

\bibitem{TsaiWest}M.-T.~Tsai, D.~B.~West, A new proof of $3$-colorability of
Eulerian triangulations, \emph{Ars Math.\ Contemp.}\ \textbf{4} (2011) 73--77.

\end{thebibliography}
\end{document}